\newtheorem{theorem}{Theorem}[section]
\newtheorem{lemma}[theorem]{Lemma}
\theoremstyle{definition}
\newtheorem{prop}[theorem]{Proposition}
\theoremstyle{remark}
\newtheorem{remark}[theorem]{Remark}
\newcommand{\bes}{{\begin{split}}}
\newcommand{\ees}{{\end{split}}}
\newcommand{\bees}{{\begin{equation}\begin{split}}}
\newcommand{\es}{{\end{split}\end{equation}}}
\newcommand{\erre}{{\mathbb R}}
\newcommand{\sech}{\textrm{sech}}
\newcommand{\bea}{\begin{eqnarray}}
\newcommand{\eea}{\end{eqnarray}}
\newcommand{\be}{\begin{equation}}
\newcommand{\ee}{\end{equation}}
\newcommand{\n}{\noindent}
\newcommand{\f}{\frac}
\newcommand{\ve}{\varepsilon}
\newcommand{\om}{\omega}
\newcommand{\ov}{\overline}
\numberwithin{equation}{section}
\begin{document}
\large

\title{Exactly solvable models and bifurcations: the case of the cubic $NLS$
  with a $\delta$ or a $\delta'$ interaction.}


\author{Riccardo Adami}

\address{Riccardo Adami: Dipartimento di Scienze Matematiche
  ``G.L. Lagrange'', Politecnico di Torino, Italy}

\address
{Diego Noja: Dipartimento di Matematica e Applicazioni, 
Universit\`a di Milano Bicocca, Italy}




\author{Diego Noja}


\keywords{}

\begin{abstract} 
We explicitly give all stationary solutions to the focusing cubic NLS
on the line, in the presence of a defect of the type
Dirac's delta or delta prime. The models proves interesting for two
features: first, they are exactly solvable and all quantities can be
expressed in terms of elementary functions. Second, the associated dynamics is
far from being trivial.  In particular, the $NLS$ with a delta prime
potential shows two
symmetry breaking bifurcations: the first concerns the ground states
and was already known. The second emerges on the first excited states,
and up to now had not been revealed. We highlight such bifurcations by
computing the nonlinear and the no-defect limits of the stationary solutions.
\end{abstract}

\maketitle

\section{Introduction}
In recent years, the spectacular development of experimental techniques in
condensed matter and ultracold gases, has greatly increased the interest in
the mathematical modeling of the dynamics of Bose-Einstein condensates, i.e. in the
Gross-Pitaevskii
equation
\begin{equation} \label{gp}
i \partial_t \psi (t,x) \ = \ - \Delta  \psi (t,x) \pm \lambda | \psi (t,x)
|^2 \psi (t,x) + V (x) \psi (t,x), \qquad t \in \erre, \, x \in
\erre^d, \lambda > 0 
\end{equation}
where the unknown $\psi (t, \cdot)$ is the wavefunction of the condensate,
the potential $V$ models the action of an external field, and the cubic term
summarizes the effects of the two-body interactions among the
particles in the condensate. It is well-known that the coupling
constant $\pm \lambda$ of the nonlinearity is proportional to the
scattering length of such a 
two-body interaction.  

The function $V$ can be characterized by the same lengthscale of the
condensate, for instance when it models the trap confining the system,
or by a much shorter lengthscale, for instance when it describes the
effect of an inhomogeneity or of an impurity. 

In this paper we focus on the latter case, restrict our study to the so-called 
cigar-shaped condensates, i.e. effectively one-dimensional systems,
and specialize 
to the choices 
\be \label{choices}
1. ~ V (x) = - \alpha \delta_0, \ \alpha > 0. \qquad 2. ~ V (x) = - \gamma
\delta^\prime_0, \ \gamma > 0.
\ee
While choice 1. can be rigorously described as the action of a
delta-shaped potential to be understood in the sense of distributions,
the formalization of choice 2. requires to make resort to  
the self-adjoint extension theory due to Von Neumann and Krein
(\cite{albeverio, dombroski}). Using such theory, one ends up by
defining the singluar interactions in \eqref{choices} by means of
suitable boundary conditions (see \cite{albeverio,kurasov}).

In the present paper we consider the equations
\begin{equation} \label{problem}
i \partial_t \psi (t) \ = \ H_j \psi (t) - \alpha | \psi (t)
|^2 \psi (t), \qquad t \in \erre, \, j=1,2 \, ,
\end{equation}
where the hamiltonian operator $H_1$ and $H_2$ are defined as follows:
\begin{itemize}
\item $H_1$ is the quantum Hamiltonian operator associated to an
  attractive  Dirac's delta potential. Its precise definition is the following:
given $\alpha > 0$,
\be \begin{split} \label{dom1}
D (H_1) \ & : = \ \{ \psi \in H^2 (\erre \backslash \{0 \})\cup H^1 (\erre), 
\ \psi' (0+) - \psi' (0-) =  \alpha \psi (0+)  =  \alpha \psi (0-) \} \\
& (H_1 \psi) (x) \  = \ - \psi^{\prime \prime} (x). 
\end{split}
\ee

\item $H_2$ is the quantum Hamiltonian operator associated to an
  attractive  delta prime potential. Given $\gamma > 0$, 
\be \begin{split} \label{dom2}
D (H_2) \ & : = \ \{ \psi \in H^2 (\erre \backslash \{0 \}), \quad \psi'(0+) = \psi' (0-), \quad
\ \psi (0+) - \psi (0-) =  - \gamma \psi' (0+) \} \\
& (H_2 \psi) (x) \  = \ - \psi^{\prime \prime} (x) .
\end{split}
\ee
\end{itemize}
Thinking of the applications to models for BEC, the natural
mathematical environment is the {\em energy space}. It is widely
known (see e.g. \cite{an-esistenziale})
that the energy space for the Dirac's delta case is 
\be \label{qdelta}
 Q_{\delta} : = H^1 (\erre) 
\ee
and, given $\psi \in Q_{\delta}$, the  related energy functional reads
\be \label{e-delta}
E_{\alpha \delta} ( \psi ) \ : = \ \f 1 2 \int_\erre | \psi' (x) |^2
\, dx - \f \alpha
2 | \psi (0) |^2 - \f \lambda 4 \int_\erre  | \psi (x) |^4
\, dx.
\ee
On the other hand, for the delta prime interaction the energy space
reads
\be \label{qdelta'}
Q_{\delta'} : = H^1 (\erre^+) \oplus H^1 (\erre^-).
\ee
and, given $\psi \in Q_{\delta'}$, the  related energy functional reads
\be \begin{split} \label{e-delta'}
E_{\gamma \delta'} ( \psi ) \ : = & \ \f 1 2 \lim_{\ve \to 0} \left[
  \int_{-\infty}^{-\ve} | \psi' (x) 
  | ^ 2 \, dx + \int_{\ve}^{+\infty} | \psi' (x)
  | ^ 2 \, dx \right] - \f 1 {2 \gamma} | \psi (0+) - \psi (0-) |^2 \\
& \ - \f \lambda 4 \int_\erre  | \psi (x) |^4
\, dx.
\end{split} \ee
In \cite{an-esistenziale} it was proven that the problem given
by \eqref{problem} is globally well-posed for any initial data in the related energy
space. Moreover, $L^2$-norm and energy are
conserved quantities.


As already stressed, the main focus of the present paper is given by
the stationary states of \eqref{problem}.
We call {\em stationary state} any square-integrable solution to the equation
\eqref{problem}
of the type
\be \label{stat}
\psi_\om (t,x) \ = \ e^{i \om t} \phi_\om (x), \qquad \om > 0.
\ee
As a consequence, the 
function $\phi_\om$ must solve the {\em
  stationary Schr\"odinger equation}
\be \label{stat-eq0}
H_j \phi_\om - \lambda | \phi_\om |^2  \phi_\om + \om \phi_\om \ = \ 0, \qquad  j=1,2
\ee
that can be rephrased as the problem of finding a function $\phi_\om$
in the appropriate energy space that satisfies the equation
\be  \label{stat-eq}
- \phi_\om^{\prime \prime}
 - \lambda | \phi_\om |^2  \phi_\om + \om \phi_\om \ = \ 0, \qquad x \neq 0 
\ee
and fulfils the boundary conditions already given in \eqref{dom1} and \eqref{dom2}:
\begin{itemize}
\item
in the case of the Dirac's delta potential
\be \label{bc-delta} 
 \phi_\om' (0+) - \phi_\om' (0-) \ = \ - \alpha \phi_\om (0+)  \ = \ -
 \alpha 
\phi_\om (0-); 
\ee
\item
in the case of the delta prime interaction 
\be \label{bc-delta'} 
 \phi_\om (0+) - \phi_\om (0-) \ = \ - \gamma \phi_\om' (0+)  \ = \ -
 \gamma
\phi_\om' (0-).
\ee
\end{itemize}
It is well-known that any $L^2$ solution to \eqref{stat-eq}
must be of the following type:
\be \label{prototype}
\phi_\om (x) \ : = \ \left\{ \begin{array}{c}
\sqrt{ \f {2\omega} \lambda} \sech (\sqrt \om ( x - x_1)), \qquad x < 0
\\ \
\\
\sqrt{ \f {2\omega} \lambda} \sech (\sqrt \om ( x -  x_2)), \qquad x > 0,
\end{array}
\right.
\ee
so the problem of finding stationary states reduces to the issue of
determining the parameters $x_1$ and $x_2$ in order to fulfil the
matching condtions \eqref{bc-delta} or \eqref{bc-delta'}.

In this paper we find all stationary solutions to equation
\eqref{problem} for both linear Hamiltonians operators $H_1$ and $H_2$
and perform two kinds of limit on them: the so-called {\em linear
  limit}, i.e. $\lambda \to 0+$ and the {\em no-defect limit}, which
amounts to $\alpha \to 0+$ for the Dirac's delta potential; for  
the delta prime interaction it is not so immediate to give a notion of
{\em no-defect limit}, so we study both limits $\gamma \to 0+$, that
seems to be justified as a no-defect limit by \eqref{dom2}, and $\gamma
\to + \infty$ that, according to \eqref{e-delta'} seems to be 
justified as a no-defect limit too.

It is well-known that the standard nonlinear Schr\"odinger equation in
dimension one 
is a completely integrable system (\cite{ZS}); adding a perturbation complete integrability is broken but some characters of exact solvability are retained. For example perturbating with a Dirac's
delta
 interaction, 
the stationary states of the system can still be exactly computed
(\cite{[CM],[GHW], fukuizumi, fukujean, lacozza}). 
In analogy with these cases, the problem of the
stationary states for equation \eqref{problem} with a delta prime
interaction proves 
exactly solvable too. In addition, as shown in \cite{an2}, the structure
of the family of ground 
states can exhibit non-trivial bifurcations. 

The paper is organized as follows: in Section 2 we treat the case of
the Dirac's delta potential, while in Section 3 we study the effect of
a delta prime interaction. Comments on the arising bifurcations are
given after any theorem.

\subsection{Notation}
Along with the symbols already introduced for energy spaces and energy
functionals, we make use of the following notation:
\begin{itemize}
\item
The $L^p (\erre)$-norm of the function $\psi$ is denoted by $\| \psi
\|_p$. The energy norm is referred to as $\| \psi \|_{Q_{\delta}}$
and $\| \psi \|_{Q_{\delta'}}$, for the case with a Dirac's delta and
a delta prime potential, respectively.

\item
We make use of a family of functionals called {\em
  action}, defined as follows: choose $\om > 0$, then
\be \label{action}
S_\om (\psi) \ = E (\psi) + \f \om 2 \| \psi \|^2_2,
\ee
where $E$ can be either $E_{\alpha \delta}$ or $E_{\gamma \delta'}$
according to the case one is examining.
\item For any $A, B > 0, C \in \erre$ we define the function
\be \label{fi}
\phi (A,B,C;x) : = A \, \sech (Bx - C)
\ee
\end{itemize}

\subsection{A preliminary lemma}
The following lemma provides an estimate that will be repeatedly used
along the paper.
\begin{lemma} \label{prelemma}  
Defined the function $\phi(A,B,C)$ as in \eqref{fi}, the following
estimate holds:
\be \begin{split} & \
\| \phi(A,B,C) - \phi(A',B',C') \|_{H^1 (\erre^+) } \\ 
\leq  & \ 
c | A - A'| \| \sech (B \cdot) \|_{H^1 (\erre) } + c | B - B' | \|
e^{- \min (B,B') | \cdot |}  \|_{H^1 (\erre) }
 + c | C - C' |  \|
\sech (B \cdot) \|_{H^2 (\erre) } , 
\end{split}
\ee
where $c$ is a positive constant.
\end{lemma}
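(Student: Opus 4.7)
The plan is a standard telescoping and triangle-inequality argument. Specifically, by introducing the intermediate configurations $(A',B,C)$ and $(A',B',C)$, I write
\[
\phi(A,B,C) - \phi(A',B',C') = \bigl[\phi(A,B,C) - \phi(A',B,C)\bigr] + \bigl[\phi(A',B,C) - \phi(A',B',C)\bigr] + \bigl[\phi(A',B',C) - \phi(A',B',C')\bigr],
\]
and estimate the three differences separately in $H^1(\erre^+)$, each one depending on the variation of a single parameter only. The three bounds will match, term by term, the three summands in the claim.

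For the $A$-difference one has the exact identity $(A-A')\sech(B\cdot - C)$, whose $H^1(\erre^+)$-norm is at most $|A-A'|\,\|\sech(B\cdot - C)\|_{H^1(\erre)}$, and translation invariance on the whole line gives $\|\sech(B\cdot - C)\|_{H^1(\erre)} = \|\sech(B\cdot)\|_{H^1(\erre)}$, which is the first term of the claim. For the $C$-difference I would invoke the fundamental theorem of calculus,
\[
\sech(B'x - C) - \sech(B'x - C') = \int_{C'}^{C} \sech(B'x - c)\tanh(B'x - c)\,dc,
\]
and use $\sech(y)\tanh(y) = -\sech'(y)$ to bound the $H^1(\erre^+)$-norm of each integrand by the $H^2(\erre)$-norm of $\sech(B'\cdot - c)$; a translation then reduces this to $\|\sech(B'\cdot)\|_{H^2(\erre)}$, and pulling $|C-C'|$ out of the integral yields the third term.

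The main obstacle is the $B$-difference. Differentiation under the scaling parameter gives
\[
\sech(Bx - C) - \sech(B'x - C) = -(B - B')\int_0^1 x\,\sech(B_s x - C)\tanh(B_s x - C)\,ds,\qquad B_s := B' + s(B - B'),
\]
so the integrand is pointwise dominated by a constant multiple of $|x|\,e^{-\min(B,B')|x|}$. The delicate point is to control such a polynomially weighted exponential, together with its $x$-derivative, in $L^2(\erre^+)$ by $\|e^{-\min(B,B')|\cdot|}\|_{H^1(\erre)}$ alone. This is accomplished by a direct integral comparison, absorbing the polynomial factor $|x|$ and the shift $|C|$ into a universal constant $c$. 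Collecting the three bounds and applying the triangle inequality yields the claimed estimate.
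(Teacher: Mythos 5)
Your proof follows essentially the same route as the paper's: a three-term telescoping via the triangle inequality in the parameters $A$, $B$, $C$, with the $A$-difference handled by linearity and translation invariance, the $B$-difference by a mean-value/parameter-differentiation argument producing the $e^{-\min(B,B')|\cdot|}$ bound, and the $C$-difference by a derivative-in-the-shift estimate yielding the $H^2$ norm (the paper carries out this last step on the Fourier side, you via the fundamental theorem of calculus; the two are equivalent and both naturally produce $\|\sech(B'\cdot)\|_{H^2}$ rather than the $\|\sech(B\cdot)\|_{H^2}$ of the statement, a harmless discrepancy). The one delicate point you flag --- absorbing the factor $|x|$ and the shift $|C|$ into the constant $c$ --- is treated no more carefully in the paper, whose inequality $|e^{-Bx}-e^{-B'x}|\le 4|B-B'|e^{-\min(B,B')|x|}$ silently drops the same factor of $|x|$; in both arguments $c$ is not truly universal but depends on a lower bound for $\min(B,B')$ (and on the size of $C$), which is innocuous in the paper's applications where these parameters stay in a fixed compact set away from zero.
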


\begin{proof}
By triangular inequality
\be
\begin{split}
 & \| \phi (A,B,C) - \phi (A',B',C') \|_{H^1 (\erre^+) } \\
\leq & \ | A - A' | \| \phi (1,B,0) \|_{H^1 (\erre) } + 
| A' | \| \phi (1,B,0) - \phi (1,B',0) \|_{H^1 (\erre) } \\ &  + 
| A' | \| \phi( 1,B',C) - \phi (1,B',C') \|_{H^1 (\erre^+) }
\\
= : & (I) + (II) + (III)
\end{split}
\ee
Term $(I)$ is already in its final form, let us work on 
$(II)$ and $(III)$.

\noindent
Concerning $(II)$, notice that 
$$ | \sech (Bx) - \sech (B'x) | \leq 4 | e^{-Bx} - e^{- B'x} | \leq 4
| B - B' | e^{- \min (B,B') |x|},$$
thus
$$ \| \sech (B \cdot) - \sech (B' \cdot) \|_2^2 \leq 16 | B - B' |^2
\| e^{- \min(B,B') |\cdot| } \|_2^2 . $$
Analogously,
$$ \left| \f d {dx} \left( \sech (Bx) - \sech (B'x) \right) \right|
\leq 6 | e^{-Bx} - e^{- B'x} | 
| B - B' | e^{- \min (B,B') |x|},$$
then
$$ \| \sech (B \cdot) - \sech (B' \cdot) \|_{H^1(\erre)} \leq c | B - B' |
\| e^{- \min (B,B') |\cdot| } \|_{H^1(\erre)} 
$$
Finally, concerning term $(III)$, we observe that
$$ \widehat {\phi(1,B',C)} (k) -  \widehat {\phi(1,B',C)} (k) = \f {e^{-ikC} -
  e^{-ikC'}} {B'} \widehat \sech \left( \f k {B'} \right), $$
where $\widehat f$ denotes the Fourier transform of the function $f$.
Then, a straightforward estimate gives
\be \nonumber
 \| \phi(1,B',C)  -   \phi(1,B',C) \|_{H^1 (\erre)}^2 \leq \f
    {|C - C' |^2}{|B'|^2} \int_\erre (1 + |k|^2) |k|^2 \left|
    \widehat \sech \left( \f k {B'} \right) \right|^2 \, dk.
\ee
The proof is complete.
 \end{proof}

\section{Dirac's delta defect}

Here we investigate the stationary states of the system
\begin{equation} \begin{split}
i \partial_t \psi (t,x) & \ = \ H_1 \psi (t,x) - \lambda |
\psi (t,x) |^2 \psi (t,x) 
\end{split}
\end{equation}
As shown by several authors (\cite{witthaut,fukuizumi,anv13}),
there exists a unique branch of stationary states that reads
\be \label{statdelta}
\psi_{\alpha \delta, \omega} \ = \ \sqrt{\f {2 \omega} \lambda}
\sech (\sqrt \omega (|x| - \bar x_{\alpha, \omega}))
\ee
where
$$
\bar x_{\alpha, \omega} = \f 1 {2 \sqrt \omega} \log \f {2
  \sqrt \omega - \alpha} 
{2 \sqrt \omega + \alpha}.
$$
Notice that $\bar x_{\alpha, \rho, \lambda} < 0$.
\vskip5pt
\noindent
The typical situation is represented in Figure 1.
\vskip5pt

\begin{figure}
\begin{center}
{\includegraphics[width=.40\columnwidth]{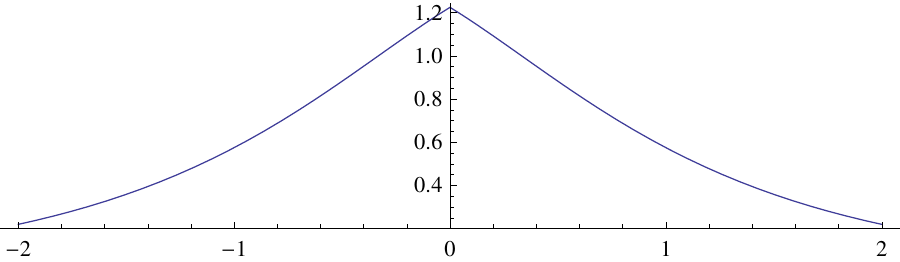}} \quad
{\includegraphics[width=.40\columnwidth]{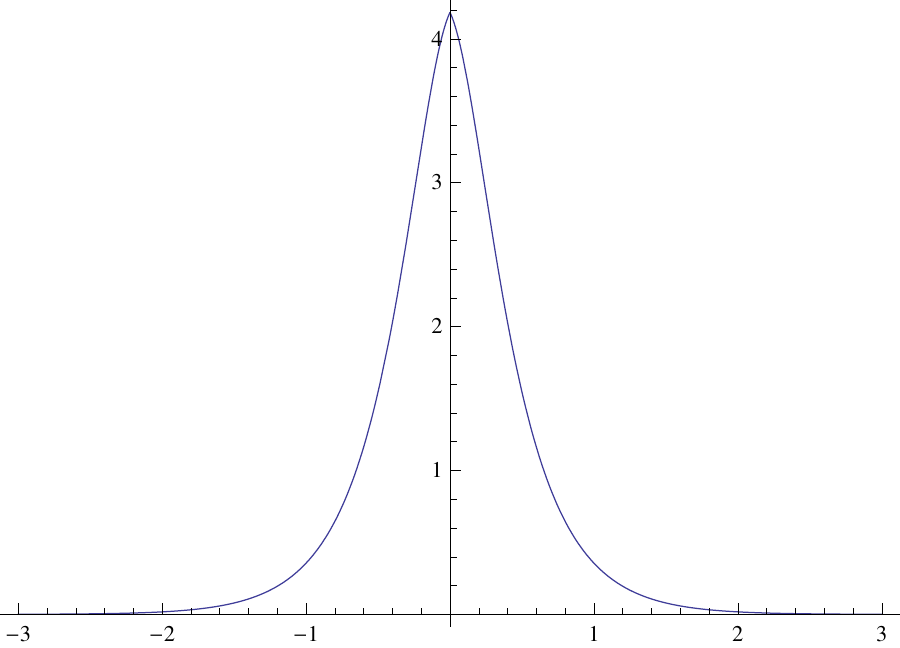}} \\
\caption{ $\psi_{\delta \om}$ for $\alpha=1$, $\lambda=1$ and $\omega=1$ $\omega=9$.}
\end{center}
\end{figure}

By direct computation, one has the following results for $L^2$-norm,
energy and action:
\begin{prop}
The $L^2$-norm, the energy, and the action of the stationary states
given in \eqref{statdelta} read 
\begin{equation} \label{osservabilidelta}
\begin{split}
\| \psi_{\alpha \delta, \omega} \|_2^2 = \f {4 \sqrt \om} \lambda - \f
   {2 \alpha} \lambda & \qquad   E  (\psi_{\alpha \delta, \omega} )
   = - \f 2 3 \f {\om^{\f 3 2}} 
{\lambda} +  \f {\alpha^3} {12 \lambda} \\
S_\omega (\psi_{\alpha \delta, \omega} ) =  \f 4 3 \f {\om^{\f 3 2}}
{\lambda} - \f {\alpha \om} {\lambda} + \f {\alpha^3} {12 \lambda} & 
\end{split}
\end{equation}
\end{prop}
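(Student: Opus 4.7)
The proof is a direct computation: all three quantities reduce to one-dimensional integrals of the form $\int \sech^{2k}(u)\,du$ or $\int \sech^2(u)\tanh^2(u)\,du$ over a half-line, since the profile in \eqref{statdelta} is even in $x$. The plan is to perform, on $\{x>0\}$, the change of variable $u = \sqrt\om (x - \bar x_{\alpha,\om})$ (and its mirror image on $\{x<0\}$), so that every integral becomes an evaluation of an elementary antiderivative at the endpoint $u_0 := -\sqrt\om\,\bar x_{\alpha,\om}$. I would use the standard formulas $\int \sech^2 = \tanh$, $\int \sech^2 \tanh^2 = \f 1 3 \tanh^3$, and $\int \sech^4 = \tanh - \f 1 3 \tanh^3$.

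The central algebraic identity, which is what makes \eqref{osservabilidelta} collapse to a polynomial in $\alpha$ and $\sqrt\om$, is
\be \nonumber
\tanh(\sqrt\om\, \bar x_{\alpha,\om}) \ = \ \tanh\!\Bigl(\f 1 2 \log \f{2\sqrt\om - \alpha}{2\sqrt\om + \alpha}\Bigr) \ = \ - \f{\alpha}{2\sqrt\om},
\ee
which follows from $\tanh(\f 1 2 \log r) = (r-1)/(r+1)$. From this I also get $\sech^2(u_0) = 1 - \tanh^2(u_0) = 1 - \alpha^2/(4\om)$, which will be used to rewrite the $\delta$-term $-\f \alpha 2 |\psi(0)|^2$ in the energy.

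With these tools I would proceed as follows. For the $L^2$-norm, by evenness and the substitution above,
$\| \psi_{\alpha\delta,\om}\|_2^2 = 2 \cdot (2\om/\lambda) \int_{u_0}^\infty \sech^2(u)\,du = (4\om/\lambda)\,\om^{-1/2}\bigl(1 - \tanh(u_0)\bigr)$,
and inserting the identity gives the first formula of \eqref{osservabilidelta}. For the energy, on each half-line $|\psi'|^2 = (2\om^2/\lambda)\sech^2(u)\tanh^2(u)$ and $|\psi|^4 = (4\om^2/\lambda^2)\sech^4(u)$. Integrating, I get the kinetic and focusing pieces as explicit functions of $\tanh(u_0)$ and $\tanh^3(u_0) = -\alpha^3/(8\om^{3/2})$; adding the $\delta$-contribution $-\f \alpha 2 |\psi(0)|^2 = -(\alpha\om/\lambda)(1-\alpha^2/(4\om))$ and collecting terms yields the stated energy, where I expect the linear-in-$\alpha$ contributions to cancel, leaving only $-\f 2 3 \om^{3/2}/\lambda + \alpha^3/(12\lambda)$.

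Finally, the action formula requires no separate argument: by \eqref{action} one has $S_\om(\psi_{\alpha\delta,\om}) = E(\psi_{\alpha\delta,\om}) + \f \om 2 \|\psi_{\alpha\delta,\om}\|_2^2$, and a one-line sum produces the third line of \eqref{osservabilidelta}. There is no analytic obstacle anywhere; the only real hazard is sign- and coefficient-bookkeeping when the kinetic, $\delta$-interaction, and focusing contributions are combined, and I would double-check the cancellation of the $\alpha\sqrt\om$ terms there.
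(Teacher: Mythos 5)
Your computation is correct and is precisely the ``direct computation'' that the paper invokes without writing out: the paper gives no proof of this proposition, and your substitution $u=\sqrt\om(x-\bar x_{\alpha,\om})$ together with the identity $\tanh(\sqrt\om\,\bar x_{\alpha,\om})=-\alpha/(2\sqrt\om)$ reproduces all three formulas, including the cancellation of the $\alpha\om/\lambda$ terms in the energy. Nothing is missing.
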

It was proven in \cite{fukuizumi,anv13} that such a branch of stationary states
is indeed made of orbitally stable ground states. Since from
\eqref{osservabilidelta}  the squared $L^2$-norm of the elements of
the branch is a monotonically increasing function of the frequency
$\omega$, it is possible to reparamatrize the family of ground states
in terms of the squared $L^2$-norm, that in the following will be
denoted by $\rho$, just by replacing
$$
\sqrt \om \ = \ \f {\lambda \rho} 4 + \f \alpha 2,
$$
so one obtains
\be 
\label{gsdeltarho} \begin{split}
\varphi_{\alpha \delta, \rho}  (x) 
\ & = \ \psi_{\alpha \delta, \omega
  (\rho)} (x) \ = \ \left( \f {\sqrt \lambda \rho} {2 \sqrt 2} + \f \alpha
    {\sqrt {2\lambda} } 
\right) \sech \left(    \left(\f {\lambda \rho} 4 + \f \alpha 2
\right)
(x - \bar x_{\alpha, \rho, \lambda})  \right) \\
\bar x_{\alpha, \rho, \lambda}  \ &  = \ x_{\alpha, \om (\lambda,
  \rho)} \ = \ \f 2 {\lambda \rho + 2
  \alpha} \log \f 
{\lambda \rho}{\lambda \rho + 4 \alpha}
\end{split}
\ee

\subsection{Linear limit}
Here we study the linear limit  {\em at
  constant mass} of the of the ground state \eqref{gsdeltarho}.
\begin{theorem} \label{22}
The following limit holds
\be \label{linlimdelta}
\varphi_{\alpha \delta, \rho} \longrightarrow \sqrt {\frac {\rho
    \alpha} 2} e^{- \f \alpha 2 |\cdot|} \quad {\rm{in}} \ H^1(\erre),
\quad  {\rm{as}} \ \lambda \to 0+.
\ee
\end{theorem}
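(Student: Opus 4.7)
The plan is to exploit the even symmetry of $\varphi_{\alpha\delta,\rho}$ and of the candidate limit to reduce the problem to the half-line. Since both functions are continuous at $x=0$, any even $H^1$ function $f$ satisfies $\|f\|_{H^1(\erre)}^2 = 2\|f\|_{H^1(\erre^+)}^2$, so it suffices to prove $\|\varphi_{\alpha\delta,\rho} - \sqrt{\rho\alpha/2}\, e^{-\alpha\cdot/2}\|_{H^1(\erre^+)}\to 0$.

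On $\erre^+$ write $\varphi_{\alpha\delta,\rho}(x) = A_\lambda \sech(B_\lambda(x+s_\lambda))$ with amplitude $A_\lambda = \f{\sqrt\lambda\,\rho}{2\sqrt2}+\f{\alpha}{\sqrt{2\lambda}}$, frequency $B_\lambda = \f{\lambda\rho}{4}+\f{\alpha}{2}$, and translation $s_\lambda = -\bar x_{\alpha,\rho,\lambda}>0$. A direct computation from the closed form of $\bar x_{\alpha,\rho,\lambda}$ gives $B_\lambda s_\lambda = \f12\log\f{\lambda\rho+4\alpha}{\lambda\rho}$, whence the key asymptotics as $\lambda\to 0+$: $B_\lambda\to\alpha/2$, $e^{-B_\lambda s_\lambda} = \sqrt{\lambda\rho/(\lambda\rho+4\alpha)} = O(\sqrt\lambda)$, and $2A_\lambda e^{-B_\lambda s_\lambda}\to \sqrt{\rho\alpha/2} =: c$. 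Applying the identity $\sech y = 2e^{-y}/(1+e^{-2y})$ (valid for $y>0$, which holds since $B_\lambda(x+s_\lambda)>0$ on $\erre^+$), I would rewrite $\varphi_{\alpha\delta,\rho}(x) = 2A_\lambda e^{-B_\lambda s_\lambda}\cdot e^{-B_\lambda x}/\bigl(1+e^{-2B_\lambda(x+s_\lambda)}\bigr)$.

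The heart of the proof is then the three-term decomposition
$\varphi_{\alpha\delta,\rho}(x) - c\,e^{-\alpha x/2} = (2A_\lambda e^{-B_\lambda s_\lambda}-c)\,e^{-B_\lambda x} \;+\; c\,(e^{-B_\lambda x}-e^{-\alpha x/2}) \;+\; R_\lambda(x)$,
where $R_\lambda(x) := -\,2A_\lambda e^{-B_\lambda s_\lambda}\,e^{-B_\lambda x}\,e^{-2B_\lambda(x+s_\lambda)}/\bigl(1+e^{-2B_\lambda(x+s_\lambda)}\bigr)$. Each piece is estimated separately in $H^1(\erre^+)$: the first vanishes because its scalar prefactor tends to $0$ while $\|e^{-B_\lambda\cdot}\|_{H^1(\erre^+)}$ stays bounded; the second is controlled both in $L^2$ and for its derivative via the elementary bound $|e^{-B_\lambda x}-e^{-\alpha x/2}|\leq x\,|B_\lambda-\alpha/2|\,e^{-\min(B_\lambda,\alpha/2)x}$, yielding an $O(\lambda)$ error; the remainder $R_\lambda$ carries the extra factor $e^{-2B_\lambda s_\lambda}=O(\lambda)$ in front of an $H^1(\erre^+)$-bounded tail, and is likewise $O(\lambda)$.

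The main obstacle is the indeterminate product $A_\lambda e^{-B_\lambda s_\lambda}$: the amplitude diverges like $\lambda^{-1/2}$ while the boundary-layer prefactor decays like $\lambda^{1/2}$, and these compensate exactly to produce the finite limit $c/2$. Identifying the cancellation requires the explicit closed form of $\bar x_{\alpha,\rho,\lambda}$ rather than a generic asymptotic argument. The preliminary Lemma \ref{prelemma} is not directly invoked here, since the limit profile is a pure exponential rather than a translated $\sech$, though the guiding idea of splitting the error into amplitude-, frequency-, and translation-type contributions is entirely in its spirit.
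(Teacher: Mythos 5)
Your argument is correct, and it reaches the result by a route that is organized differently from the paper's. The paper also uses a three-term triangle inequality, but its intermediate objects are $\sech$ profiles: it first passes from $\varphi_{\alpha\delta,\rho}=\phi\bigl(\f{\sqrt\lambda\rho}{2\sqrt2}+\f{\alpha}{\sqrt{2\lambda}},\f{\lambda\rho}{4}+\f{\alpha}{2},\bar x_{\alpha,\rho,\lambda}\bigr)$ to $\phi\bigl(\f{\alpha}{\sqrt{2\lambda}},\f{\alpha}{2},\bar x_{\alpha,\rho,\lambda}\bigr)$ by invoking Lemma \ref{prelemma} (amplitude and frequency corrections of size $\sqrt\lambda$ and $\lambda$), then replaces the $\sech$ by the pure exponential $\varphi_1(x)=\sqrt{2/\lambda}\,\alpha e^{-\f{\alpha}{2}(|x|-\bar x_{\alpha,\rho,\lambda})}$ via explicit integral estimates, and finally lets the scalar prefactor converge to $\sqrt{\rho\alpha/2}$. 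You instead convert to exponentials at the very first step through $\sech y=2e^{-y}/(1+e^{-2y})$ and never use the lemma; your three terms are the prefactor convergence $2A_\lambda e^{-B_\lambda s_\lambda}\to c$, the frequency mismatch $e^{-B_\lambda x}-e^{-\alpha x/2}=O(\lambda)$, and the remainder $R_\lambda$ carrying the factor $e^{-2B_\lambda s_\lambda}=O(\lambda)$. Both proofs hinge on the same cancellation between the diverging amplitude $O(\lambda^{-1/2})$ and the translation-induced factor $O(\lambda^{1/2})$; your version isolates it slightly more cleanly, since keeping the full frequency $B_\lambda$ in the exponent makes $B_\lambda\bar x_{\alpha,\rho,\lambda}=-\f12\log\f{\lambda\rho+4\alpha}{\lambda\rho}$ exact, whereas the paper's exponent $\f{\alpha}{\lambda\rho+2\alpha}$ only tends to $\f12$. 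What the paper's route buys is reuse of Lemma \ref{prelemma}, which it needs repeatedly elsewhere; what yours buys is a self-contained, purely exponential computation. The only places where you are terse --- the $H^1$ (as opposed to $L^2$) control of the second term and of $R_\lambda$ --- go through by the same mean-value and tail bounds you already state, so there is no gap.
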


\begin{proof}
By triangular inequality,
\be \begin{split} \label{iltriangolono}
& \left\| \varphi_{\alpha \delta, \rho} - \sqrt {\frac {\rho
    \alpha} 2} e^{- \f \alpha 2 |\cdot|} \right\|_{ H^1(\erre)} \\
\leq & 
\left\| \varphi_{\alpha \delta, \rho} - \phi \left(\f \alpha {\sqrt 2
    \lambda}, \f \alpha 2, \bar x_{\alpha, \rho, \lambda} \right)
\right\|_{ H^1(\erre)} + \left\|  \phi \left(\f \alpha {\sqrt 2
    \lambda}, \f \alpha 2, \bar x_{\alpha, \rho, \lambda} \right)
- \varphi_1 \right\|_{ H^1(\erre)} \\
&  + \left\| \varphi_1 - \sqrt {\frac {\rho
    \alpha} 2} e^{- \f \alpha 2 |\cdot|} \right\|_{ H^1(\erre)} \\
= & (I) + (II) + (III)
\end{split} \ee
where 
$
 \varphi_1 (x)   : =   {\sqrt {\f 2 \lambda}} \alpha e^{-
   \f \alpha 2 (|x| - 
   \bar x_{\alpha, \rho, \lambda} )}. 
$
Noting that $\varphi_{\alpha \delta, \rho} = \phi \left(
\frac {\sqrt \lambda \rho} {2 \sqrt 2} + \f \alpha {\sqrt{2 \lambda}}, 
\f {\lambda \rho} 4 + \f \alpha 2, \bar x_{\alpha, \rho, \lambda} \right),$
term (I) can be estimated by Lemma \ref{prelemma} as
$$
(I) \ \leq \ c \rho \sqrt {\lambda} \left\| \sech \left(
\f \alpha 2 \cdot \right)
\right\|_{H^1 (\erre)} + c \rho \lambda    \left\| e^{- \f \alpha 2 |
  \cdot | }
\right\|_{H^1 (\erre)}.
$$
Concerning term $(II)$, by straightforward computations one has
\be \begin{split} \label{II.a}
\left\|    \phi \left(\f \alpha {\sqrt 2
    \lambda}, \f \alpha 2, \bar x_{\alpha, \rho, \lambda} \right)
- \varphi_1 
\right\|_2^2 \ = & \
 \f {4 {\alpha^2}} \lambda \int_0^{+\infty} \f {e^{-  \alpha  (
  x - \bar x_{\alpha, \rho, \lambda})}}{ (e^{ \alpha  (
  x - \bar x_{\alpha, \rho, \lambda})} + 1            )^2 } dx \\
\leq & \   \f {4 \alpha^2} \lambda e^{3  \alpha  (
  x - \bar x_{\alpha, \rho, \lambda})} \int_0^{+\infty} e^{-3 \alpha
  x} \, dx \
\leq \  c \rho^3 \alpha^2 \lambda^2,
\end{split} 
\ee
where, in the last line, we used the definition of $\bar x_{\alpha, \rho,
  \lambda}$. 
Furthermore,
\be \begin{split} \label{II.b}
& \left\| \f d {dx} \left(   \phi \left(\f \alpha {\sqrt 2
    \lambda}, \f \alpha 2, \bar x_{\alpha, \rho, \lambda} \right)
- \varphi_1 \right)
\right\|_2^2 
\\
= & \  \f {4 \alpha^4} \lambda \int_0^{+\infty} \left|\f
{e^{-  \alpha  (
  x - \bar x_{\alpha, \rho, \lambda})}} { e^{ \alpha  (
  x - \bar x_{\alpha, \rho, \lambda})} + 1             } +
\f {e^{ \alpha  (
  x - \bar x_{\alpha, \rho, \lambda})}} { (e^{ \alpha  (
  x - \bar x_{\alpha, \rho, \lambda})} + 1            )^2 }
\right|^2 \, dx \\
\leq & \ c \alpha^4  \int_0^{+\infty} e^{-  \alpha  (
  x - \bar x_{\alpha, \rho, \lambda})} \, dx \ \leq \ c \alpha^4
\rho^3 \lambda^2.
\end{split} \ee
Concerning $(III)$, it is sufficient to notice that
$$ \varphi_1 (x) = \sqrt{\f 2 \lambda} \alpha \left( \f {\lambda \rho} {{\lambda \rho + 4
    \alpha}}  \right)^{\f \alpha  {\lambda \rho + 2
    \alpha}} e^{- \f \alpha 2 |x|},
$$
and since 
$$ \sqrt{\f 2 \lambda} \alpha \left( \f {\lambda \rho} {{\lambda \rho + 4
    \alpha}}  \right)^{\f \alpha  {\lambda \rho + 2
    \alpha}} \ \longrightarrow \sqrt{\f {\rho \alpha} 2} \quad
{\rm{as}} \ \lambda \to 0+
$$
we have that term $(III)$ vanishes too and the proof is complete.
\end{proof}

\subsection{No-defect limit}
\begin{theorem}
The following limit holds:
$$
\varphi_{\alpha \delta, \rho} \longrightarrow \phi \left( \f {\lambda
  \rho} {2 \sqrt 2}, \f {\lambda
  \rho} 4, 0 \right) ~ {\rm{in}} \ H^1 (\erre), \qquad \alpha \to 0+.
$$
\end{theorem}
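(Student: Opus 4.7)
My plan is to imitate the strategy of the proof of Theorem \ref{22}: represent both $\varphi_{\alpha\delta,\rho}$ and the limiting profile in the canonical $\phi(A,B,C)$ form on the positive half-line, and then invoke Lemma \ref{prelemma}. Since both $\varphi_{\alpha\delta,\rho}$ (through the $|x|$ in \eqref{statdelta}) and the target $\phi(\cdot,\cdot,0)$ are even functions of $x$, the $H^1(\erre)$ norm of their difference is controlled by twice its $H^1(\erre^+)$ norm, so it suffices to work on the positive half-line. From \eqref{gsdeltarho} one reads
$$\varphi_{\alpha\delta,\rho}(x)\ =\ \phi\bigl(A_\alpha,\,B_\alpha,\,B_\alpha\,\bar{x}_{\alpha,\rho,\lambda};\,x\bigr), \qquad x>0,$$
with $A_\alpha = \f{\sqrt{\lambda}\rho}{2\sqrt{2}} + \f{\alpha}{\sqrt{2\lambda}}$ and $B_\alpha = \f{\lambda\rho}{4} + \f{\alpha}{2}$, while the target has the form $\phi(A_0,B_0,0;x)$ obtained by setting $\alpha = 0$ in the above expressions.

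Next I would plug these choices into Lemma \ref{prelemma}. The three ingredients that appear are $|A_\alpha - A_0| = \f{\alpha}{\sqrt{2\lambda}}$, $|B_\alpha - B_0| = \f{\alpha}{2}$ and $|C_\alpha - 0| = B_\alpha\,|\bar{x}_{\alpha,\rho,\lambda}|$. The first two vanish linearly in $\alpha$. For the third, I would Taylor-expand
$$\bar{x}_{\alpha,\rho,\lambda}\ =\ \f{2}{\lambda\rho + 2\alpha}\,\log\f{\lambda\rho}{\lambda\rho+4\alpha}$$
around $\alpha = 0$ using $\log\bigl(1 - 4\alpha/(\lambda\rho+4\alpha)\bigr) = -\f{4\alpha}{\lambda\rho} + O(\alpha^2)$, to conclude $\bar{x}_{\alpha,\rho,\lambda} = O(\alpha)$, and hence $C_\alpha = O(\alpha)$ as well. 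Moreover, the $H^1$ and $H^2$ norms of $\sech(B_\alpha\,\cdot)$ and the $H^1$ norm of $e^{-\min(B_\alpha,B_0)|\cdot|}$ that appear as coefficients in Lemma \ref{prelemma} stay uniformly bounded as $\alpha\to 0^+$, because $B_\alpha \to B_0 = \f{\lambda\rho}{4} > 0$. Combining these observations one obtains $\|\varphi_{\alpha\delta,\rho} - \phi(A_0,B_0,0)\|_{H^1(\erre^+)} = O(\alpha)$, and the analogous bound on $\erre^-$ follows by symmetry.

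I do not expect a substantial obstacle: the only ingredient that is not immediate is the rate at which the offset $\bar{x}_{\alpha,\rho,\lambda}$ collapses to zero, and this is extracted at once from the above logarithmic expansion. The remaining uniform $H^k$ bounds on the $\sech$ and exponential functions are ensured by the strict positivity of the limiting frequency $B_0 = \f{\lambda\rho}{4}$. The argument in fact yields the sharper quantitative statement that the convergence holds at rate $O(\alpha)$.
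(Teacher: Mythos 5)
Your proposal is correct and follows exactly the route the paper takes: the paper's proof consists of the single observation that $\varphi_{\alpha\delta,\rho}=\phi\bigl(\tfrac{\sqrt\lambda\rho}{2\sqrt2}+\tfrac{\alpha}{\sqrt{2\lambda}},\,\tfrac{\lambda\rho}{4}+\tfrac{\alpha}{2},\,\bar x_{\alpha,\rho,\lambda}\bigr)$ followed by an appeal to Lemma \ref{prelemma}, and you simply spell out the parameter differences and the $O(\alpha)$ decay of $\bar x_{\alpha,\rho,\lambda}$ that the paper leaves implicit. Your explicit handling of the centre parameter and the resulting quantitative rate $O(\alpha)$ are welcome refinements, but the argument is the same.
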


\begin{proof}
The result immediately follows from Lemma \ref{prelemma}, once observed
that 
$$\varphi_{\alpha \delta, \rho} = \phi \left(
\frac {\sqrt \lambda \rho} {2 \sqrt 2} + \f \alpha {\sqrt {2 \lambda}}, 
\f {\lambda \rho} 4 + \f \alpha 2, \bar x_{\alpha, \rho, \lambda} \right).$$
\end{proof}

\section{Delta prime defect}
Here we investigate the stationary states of the system
\begin{equation} \begin{split}
i \partial_t \psi (t,x) & \ = \ H_2 \psi (t,x) - \lambda |
\psi (t,x) |^2 \psi (t,x) .
\end{split}
\end{equation}
To this purpose,
we reduce equation
\eqref{stat-eq} with boundary conditions \eqref{bc-delta'} to a couple
of systems of two equations in two real 
unknowns. For the convenience of the reader, we preliminarily solve such
systems.

\subsection{Two algebraic systems}
We preliminary solve a couple of systems that will be used in order to
explicitly write down the stationary solutions to \eqref{problem}, \eqref{bc-delta'}.

\begin{prop} \label{3.1} For any $\om > 0$,
the only solution $(t_{1}, t_{2})$ to the system
\be 
\label{tipiu}
T_+ : = \left\{  \begin{array}{ccc}
t_1^2 - t_1^4 & = & t_2^2 - t_2^ 4
\\ & & \\
t_1^{-1} - t_2^{-1} & = & \gamma \sqrt \om
\end{array} \right.
\ee
satisfying the condition $0 \leq t_1, t_2 \leq 1$, is given by
\be \begin{split} \label{solt+}
t_1 \ = & \ \f{1 - \sqrt{1 + \gamma^2 \om} + \sqrt{ \gamma^2 \om - 2 +
    2 \sqrt{1 + \gamma^2 \om}}} 
{2 \gamma \sqrt \om} \\
t_2  \ = & \ \f{- 1 + \sqrt{1 + \gamma^2 \om} + \sqrt{ \gamma^2 \om -
    2 + 2 \sqrt{1 + \gamma^2 \om}}} 
{2 \gamma \sqrt \om}
\end{split} \ee
\end{prop}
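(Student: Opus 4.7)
The plan is to reduce the coupled system to a single quadratic equation in the symmetric polynomials of $t_1, t_2$, and then unwind the substitution. The first key observation is that the first equation of $T_+$ factors as
\[
t_1^2 - t_2^2 - (t_1^4 - t_2^4) \ = \ (t_1^2 - t_2^2)(1 - t_1^2 - t_2^2) \ = \ 0,
\]
so either $t_1 = t_2$ or $t_1^2 + t_2^2 = 1$. The first possibility is immediately ruled out by the second equation of $T_+$, since $\gamma\sqrt{\om} > 0$ forces $t_1 \neq t_2$ (and in fact $t_1 < t_2$). Thus the problem reduces to
\[
t_1^2 + t_2^2 \ = \ 1, \qquad t_2 - t_1 \ = \ \gamma \sqrt{\om} \, t_1 t_2 .
\]

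Next I would introduce $s := t_1 + t_2$ and $p := t_1 t_2$. The constraint $t_1^2 + t_2^2 = 1$ reads $s^2 = 1 + 2p$, while the identity $(t_2 - t_1)^2 = s^2 - 4p = 1 - 2p$ together with the second equation yields the quadratic
\[
\gamma^2 \om \, p^2 + 2 p - 1 \ = \ 0.
\]
Selecting the root compatible with $p = t_1 t_2 \geq 0$, one obtains
\[
p \ = \ \f{-1 + \sqrt{1 + \gamma^2 \om}}{\gamma^2 \om}, \qquad s \ = \ \f{\sqrt{\gamma^2 \om - 2 + 2 \sqrt{1 + \gamma^2 \om}}}{\gamma \sqrt \om},
\]
and then $t_2 - t_1 = \gamma \sqrt{\om}\, p = (\sqrt{1 + \gamma^2 \om} - 1)/(\gamma \sqrt \om)$. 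The desired formulas for $t_1$ and $t_2$ in \eqref{solt+} then follow directly from $t_1 = (s - (t_2-t_1))/2$ and $t_2 = (s + (t_2-t_1))/2$.

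It remains to check that $s$ is real and that the resulting pair $(t_1, t_2)$ lies in $[0,1]^2$. Writing $u := \sqrt{1 + \gamma^2 \om} \geq 1$, the radicand of $s$ equals $u^2 + 2u - 3 = (u-1)(u+3) \geq 0$, so $s \in \erre$. From $t_1^2 + t_2^2 = 1$ with both $t_j \geq 0$ one automatically gets $t_1, t_2 \leq 1$, so the only nontrivial verification is $t_1 \geq 0$, equivalent after squaring to $4\sqrt{1+\gamma^2\om} \geq 4$, which holds. Uniqueness is then built into the derivation: the factorization of the first equation exhausts all possibilities, the quadratic in $p$ has only one nonnegative root, and the sign of $t_2 - t_1$ is fixed by the second equation. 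The main (minor) obstacle is bookkeeping these sign choices carefully, since each extraction of a square root formally introduces two branches that must be eliminated by the constraint $0 \leq t_1, t_2 \leq 1$.
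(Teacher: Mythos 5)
Your proof is correct and follows essentially the same route as the paper's: factor the first equation as $(t_1^2-t_2^2)(t_1^2+t_2^2-1)=0$, discard the branch $t_1=t_2$, multiply the second equation by $t_1t_2$ and square to determine $t_1t_2$, then recover $t_1$ and $t_2$ from the sum/difference together with $t_1^2+t_2^2=1$. The only difference is that you carry out the positivity and sign bookkeeping (nonnegativity of the radicand, $t_1\geq 0$, uniqueness of branches) more explicitly than the paper, which simply states "imposing positivity".
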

\begin{proof} 
Along the proof we consider the case $\gamma = 1$. The generic case
can be recovered by the replacement $\omega \to \gamma^2 \omega$.

\noindent
The first equation in \eqref{tipiu} rewrites as
$$
(t_1^2 - t_2^2) (t_1^2 + t_2^2 - 1) \ = \ 0,
$$
so the system splits into two subsystems $T_{1,+}$ and $T_{2,+}$, defined by
\be T_{1,+} : = \left\{  \begin{array}{ccc}
t_1 & = & t_2
\\ & & \\
t_1^{-1} - t_2^{-1} & = & \sqrt \om
\end{array}
\right. \nonumber
\ee
and
\be T_{2,+} : = \left\{  \begin{array}{ccc}
t_1^2 + t_2^2 & = & 1
\\ & & \\
t_1^{-1} - t_2^{-1} & = & \sqrt \om
\end{array}
\right. \nonumber
\ee
The system $T_{1,+}$ can be solved only for $\om = 0$, so we are not
interested  in such a solution

\noindent
Let us consider the system $T_{2,+}$. Multiplying the second equation by $t_1t_2$ 
one obtains
\be \label{summa}
t_2 - t_1 \ = \ \sqrt \om t_1 t_2,
\ee
thus squaring both sides, using the first equation of $T_2$ and
imposing positivity,   we obtain
$$
t_1 t_2 \ = \ \f{\sqrt{1 +  \om} - 1} {\om}
$$
and so, by \eqref{summa}
\be \label{strauss}
t_2 - t_1 \ = \ \f{\sqrt{1 +  \om} - 1}{\sqrt \om}.
\ee
By \eqref{strauss} and the first equation of $T_{2,+}$, we finally get
\eqref{solt+}.
%
\end{proof}

\begin{prop}
The solutions $(t_1, t_2)$ to the system
\be 
\label{timeno}
T_- : = \left\{  \begin{array}{ccc}
t_1^2 - t_1^4 & = & t_2^2 - t_2^ 4
\\ & & \\
t_1^{-1} + t_2^{-1} & = & \gamma \sqrt \om
\end{array} \right. \ee
with the condition $0 \leq t_1, t_2 \leq 1$, can be classified as
follows: 
\begin{enumerate}
\item For $0 < \om \leq \f 4 {\gamma^2}$ there are no solutions.
\item For $\om > \f 4 {\gamma^2}$ there exists the solution
\be \label{t-symm}
t_1 \ = \ t_2 \ = \ \f 2 {\gamma \sqrt \om}.
\ee
\item For $\om > \f 8 {\gamma^2}$ there exist two solutions $(t_{1}, t_{2})$,
  and $(t_{2}, t_{1})$, where
\be \begin{split} \label{solt-asym}
t_{1} \ = & \ \f{1 + \sqrt{1 + \gamma^2 \om} + \sqrt{ \gamma^2 \om - 2
    - 2  \sqrt{1 + \gamma^2 \om}}}
{2 \gamma \sqrt \om} \\
t_{2}  \ = & \ \f{ 1 + \sqrt{1 + \gamma^2 \om} - \sqrt{ \gamma^2 \om -
    2 - 2  \sqrt{1 + \gamma^2 \om}}}
{2 \gamma \sqrt \om}
\end{split}
\ee
\end{enumerate}
\end{prop}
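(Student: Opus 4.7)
The plan is to mimic the strategy used for $T_+$ in Proposition \ref{3.1}. First I would reduce to the normalized case $\gamma=1$ by the scaling $\omega\mapsto\gamma^2\omega$, which leaves the system invariant in the right way, and then factor the first equation as $(t_1^2-t_2^2)(t_1^2+t_2^2-1)=0$. This splits $T_-$ into a symmetric subsystem
$$T_{1,-}:\ t_1=t_2,\quad t_1^{-1}+t_2^{-1}=\sqrt{\omega}$$
and an orthogonality subsystem
$$T_{2,-}:\ t_1^2+t_2^2=1,\quad t_1^{-1}+t_2^{-1}=\sqrt{\omega}.$$

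For $T_{1,-}$, the solution is immediate: $t_1=t_2=2/\sqrt{\omega}$, and the admissibility condition $t_1\leq 1$ yields exactly the threshold $\omega\geq 4$ (equivalently $\omega\geq 4/\gamma^2$ after undoing the scaling); this produces case (2). For $T_{2,-}$, I would introduce the elementary symmetric functions $s=t_1+t_2$ and $p=t_1 t_2$. Multiplying the second equation by $p$ gives $s=\sqrt{\omega}\,p$, and the first equation becomes $s^2-2p=1$. Eliminating $s$ leaves the single quadratic $\omega p^2-2p-1=0$, whose unique positive root is
$$p=\frac{1+\sqrt{1+\omega}}{\omega},\qquad s=\sqrt{\omega}\,p=\frac{1+\sqrt{1+\omega}}{\sqrt{\omega}}.$$

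The pair $(t_1,t_2)$ then consists of the roots of $X^2-sX+p=0$. Their reality requires the discriminant $s^2-4p=1-2p$ to be nonnegative, which after substituting the formula for $p$ and simplifying (and discarding the extraneous sign when squaring) reduces to $\omega^2-8\omega>0$, i.e.\ $\omega>8$. Writing the two roots as $(s\pm\sqrt{s^2-4p})/2$ yields exactly the expressions in \eqref{solt-asym} once one notes $s^2-4p=(\omega-2-2\sqrt{1+\omega})/\omega$. I would finally check the admissibility $0\leq t_i\leq 1$: positivity is automatic from $s,p>0$, while $t_i^2\leq t_1^2+t_2^2=1$ gives the upper bound for free. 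Undoing the scaling $\omega\mapsto\gamma^2\omega$ and the threshold $8\mapsto 8/\gamma^2$ recovers case (3).

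The only real subtlety, and the step I expect to require some care, is the discriminant/threshold analysis: one must square the inequality $2\sqrt{1+\omega}<\omega-2$ only after ensuring its right-hand side is positive (which forces $\omega>2$, harmlessly implied by the final condition $\omega>8$), and one must confirm that no spurious solutions are introduced at the cutoffs $\omega=4/\gamma^2$ and $\omega=8/\gamma^2$ that would merge the branches. Everything else is bookkeeping with symmetric functions.
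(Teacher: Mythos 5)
Your proposal is correct and follows essentially the same route as the paper: the same factorization $(t_1^2-t_2^2)(t_1^2+t_2^2-1)=0$, the same split into the subsystems $T_{1,-}$ and $T_{2,-}$, and the same elimination leading to $t_1t_2=(1+\sqrt{1+\omega})/\omega$ and $t_1+t_2=(1+\sqrt{1+\omega})/\sqrt{\omega}$, with the threshold $\omega>8$ coming from the sign of the discriminant; your symmetric-function bookkeeping and explicit admissibility checks are just a slightly more systematic packaging of the paper's computation.
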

\begin{proof}
As in the previous proof, we put $\gamma = 1$ and to write the final
result we perform the change $\omega \to \gamma^2 \omega$.
 We rewrite the first equation in
\eqref{solt-asym} as
$$
(t_1^2 - t_2^2) (t_1^2 + t_2^2 - 1) \ = \ 0,
$$
so the system \eqref{solt-asym} is equivalent to the union of 
the systems
\be T_{1,-} : = \left\{  \begin{array}{ccc}
t_1 & = & t_2
\\ & & \\
t_1^{-1} + t_2^{-1} & = & \sqrt \om
\end{array}
\right. \nonumber
\ee
and
\be T_{2,-} : = \left\{  \begin{array}{ccc}
t_1^2 + t_2^2 & = & 1
\\ & & \\
t_1^{-1} + t_2^{-1} & = & \sqrt \om
\end{array}
\right. \nonumber
\ee
The system ${T_{1,-}}$ gives the solution \eqref{t-symm}. The condition $\om >
4$ emerges by the prescription $t_1, t_2 < 1$, so point {\em 1.} is
proven. 

Let us consider the system $T_{2,-}$. Multiplying the second equation by $t_1t_2$ 
one obtains
\be \label{summa2}
t_1 + t_2 \ = \  \sqrt \om t_1 t_2,
\ee
thus squaring both sides, using the first equation of $T_{2,-}$ and
imposing positivity,   we have
$$
t_1 t_2 \ = \ \f{1 + \sqrt{1 +  \om}} { \om}
$$
and so, by \eqref{summa2}
\be \label{strauss2}
t_1 + t_2 \ = \ \f{1 + \sqrt{1 +  \om}}{ \sqrt \om}.
\ee
By \eqref{strauss2} and the first equation of ${T_{2,-}}$, we finally get
\eqref{solt-asym}. The positivity of the quantity under the square
root of \eqref{solt-asym} gives the condition $\om > 8$, so the proof
is complete.
\end{proof}

\subsection{Stationary states for the case with a $\delta'$ potential}
 
In the following theorem we explicitly give all solutions to
\eqref{stat-eq}-\eqref{bc-delta'}.

\begin{theorem}\label{statstat}
The stationary states of equation \eqref{problem} can be classified as
follows:
\begin{enumerate}
\item The unperturbed symmetric solitary wave
\be \label{unperturbed} 
\psi_{\om}^{0,0}  (x) \ : = \ 
\sqrt{\f{2 \om} \lambda} \sech (\sqrt\om x).
\ee
Such solutions are present for any $\om > 0$.

\item The asymmetric non-changing sign solutions
\be \label{stazzionario2+} 
\psi_{\om, +}^{x_{1\pm}, x_{2\pm}} (x) \ : = \ \left\{ 
\begin{array}{ccc}
\sqrt{\f{2 \om} \lambda} \sech (\sqrt\om (x-x_{1 \pm})), & & x< 0 \\ & & \\
\sqrt{\f{2 \om} \lambda} \sech (\sqrt\om (x-x_{2 \pm})), & & x> 0,
\end{array} \right.
\ee
\be \begin{split} \label{313}
x_{1+} \ = & \ \f 1 {2 \sqrt \om} \log \f{1+ t_1} {1 - t_1}, \quad
x_{2+} \ =  \ \f 1 {2 \sqrt \om} \log \f{1+ t_2} {1 - t_2} \\ \\
x_{1-} \ = & \ - \f 1 {2 \sqrt \om} \log \f{1+ t_2} {1 - t_2}, \quad 
x_{2-} \ =  \ - \f 1 {2 \sqrt \om} \log \f{1+ t_1} {1 - t_1}
\end{split}, \ee
where the couple $(t_1, t_2)$ solves the system \eqref{tipiu}.

\noindent
Such solutions are present for any $\om > 0$.

\item  The changing sign solutions, which, in turn, can be
classified as
\begin{enumerate}
\item the antisymmetric solutions
\be \label{stazzionario} 
\psi_{\om, -}^{\bar x, - \bar x} \ : = \ \left\{ 
\begin{array}{ccc}
- \sqrt{\f {2 \om} \lambda} \, \sech (\sqrt\om (x- \bar x)), & & x< 0 \\ & & \\
 \sqrt{\f {2 \om} \lambda} \, \sech (\sqrt\om (x + \bar x)), & & x > 0,
\end{array} \right.
\ee
where 
\be \label{xbar}
\bar x = \f 1 {2 \sqrt \om} \log \f{\gamma \sqrt \om + 2} {\gamma \sqrt \om - 2}.
\ee
Such solutions are present for $\om > \f 4{\gamma^2}$;

\item the asymmetric solutions
\be \label{stazzionario2-} 
\psi_{\om, -}^{\bar x_{1 \pm}, \bar x_{2 \pm}} \ : = \  \left\{ 
\begin{array}{ccc}
\sqrt{\f {2 \om} \lambda} \, \sech (\sqrt\om (x-\bar x_{1 \pm})), & & x< 0 \\  & & \\
- \sqrt{\f {2 \om} \lambda} \, \sech (\sqrt\om (x-\bar x_{2 \pm})), & & x> 0,
\end{array} \right.
\ee

\be \label{317} \begin{split}
\bar x_{1+} \ = & \ \f 1 {2 \sqrt \om} \log \f{1+ t_1} {1 - t_1}, \quad
\bar x_{2+} \ =  \ - \f 1 {2 \sqrt \om} \log \f{1+ t_2} {1 - t_2} \\
\bar x_{1-} \ = & \  \f 1 {2 \sqrt \om} \log \f{1+ t_2} {1 - t_2}, \quad
\bar x_{2-} \ =  \ - \f 1 {2 \sqrt \om} \log \f{1+ t_1} {1 - t_1}
\end{split}, 
\ee
where the couple $(t_1, t_2)$ solves the system \eqref{timeno}.

\noindent
Such solutions are present for any $\om > \f 8 {\gamma^2}$.
\end{enumerate}
\end{enumerate}
\end{theorem}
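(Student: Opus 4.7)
The plan is to reduce the boundary-value problem \eqref{stat-eq}--\eqref{bc-delta'} to the two algebraic systems $T_\pm$ already solved in the previous subsection, and then to read off each family of stationary states from the corresponding $(t_1, t_2)$. Every $L^2$ solution of \eqref{stat-eq} on each half-line is, up to a common multiplicative phase, real-valued, and by \eqref{prototype} has the form $\ve_j \sqrt{2\om/\lambda}\,\sech(\sqrt\om(x - x_j))$ on each side of the origin ($j=1$ for $x<0$ and $j=2$ for $x>0$), with $x_1, x_2 \in \erre$ and signs $\ve_1, \ve_2 \in \{\pm 1\}$. Since both \eqref{stat-eq} and the conditions \eqref{bc-delta'} are invariant under $\phi_\om \mapsto -\phi_\om$, I can fix $\ve_1 = 1$ and analyse separately the non-changing sign case $\ve_2 = 1$ and the changing sign case $\ve_2 = -1$.

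Next I would introduce the auxiliary parameters $\tau_j := \tanh(\sqrt\om x_j) \in (-1,1)$; a short computation then gives $\phi_\om(0\pm) = \ve_\pm \sqrt{2\om/\lambda}\,\sqrt{1 - \tau_\pm^2}$ and $\phi_\om'(0\pm) = \ve_\pm \sqrt{2\om/\lambda}\,\sqrt\om\,\sqrt{1-\tau_\pm^2}\,\tau_\pm$, where $\tau_- := \tau_1$ and $\tau_+ := \tau_2$. Squaring the continuity condition $\phi_\om'(0+) = \phi_\om'(0-)$ of \eqref{bc-delta'} yields, independently of $\ve_2$, the relation $\tau_1^2(1 - \tau_1^2) = \tau_2^2(1 - \tau_2^2)$, which factors into the two sub-alternatives $\tau_1^2 = \tau_2^2$ or $\tau_1^2 + \tau_2^2 = 1$. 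The jump condition for $\phi_\om$, divided by the appropriate product of nonzero factors, collapses to the second equation of $T_+$ when $\ve_2 = 1$ and to the second equation of $T_-$ when $\ve_2 = -1$; in each case sign tracking identifies the signed $\tau_j$ with $\pm t_j$, where the $t_j > 0$ are the unknowns of Proposition \ref{3.1} and of its successor.

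In the non-changing sign case the sub-alternative $\tau_1 = \tau_2$ combined with the jump condition forces $x_1 = x_2 = 0$ and produces the unperturbed profile \eqref{unperturbed}, while $\tau_1^2 + \tau_2^2 = 1$ admits the two sign assignments $(\tau_1, \tau_2) = (t_1, t_2)$ (both positive) and $(\tau_1, \tau_2) = (-t_2, -t_1)$ (both negative); via Proposition \ref{3.1} and the inversion $x_j = \f 1{\sqrt\om}\tanh^{-1}(\tau_j)$ these deliver respectively the two non-changing sign branches in \eqref{313}, present for every $\om > 0$. In the changing sign case, the sub-alternative $\tau_1 = -\tau_2$ reproduces $T_{1,-}$ and yields the antisymmetric solution \eqref{stazzionario} with $\bar x$ as in \eqref{xbar}, admissible exactly when $|\tau_1| < 1$, i.e. $\om > \f 4{\gamma^2}$; the sub-alternative $\tau_1^2 + \tau_2^2 = 1$ together with $\tau_1 \tau_2 < 0$ (forced by the sign of the jump) reproduces $T_{2,-}$ and provides the two asymmetric branches \eqref{stazzionario2-}, \eqref{317}, admissible whenever $\om > \f 8{\gamma^2}$.

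The main obstacle is the careful bookkeeping of the signs of $\tau_1, \tau_2$: since the systems $T_\pm$ are stated only for positive unknowns, one must consistently pair the signed $\tau_j$ with $\pm t_j$ to recover all the $(\cdot)_+$ and $(\cdot)_-$ subbranches and to confirm that the existence thresholds $\om > \f 4{\gamma^2}$ and $\om > \f 8{\gamma^2}$ are sharp. Once this case analysis is properly organized, the remainder is pure algebra: inverting $\tau_j = \tanh(\sqrt\om x_j)$ produces the closed-form expressions \eqref{313}, \eqref{xbar} and \eqref{317} displayed in the statement.
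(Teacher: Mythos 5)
Your proposal is correct and follows essentially the same route as the paper: reduce the matching conditions \eqref{bc-delta'} to the algebraic systems $T_\pm$ via the substitution $t_j=\tanh(\sqrt\om\, x_j)$ (first equation from continuity of the derivative, second from the jump condition), then invert to obtain \eqref{313}, \eqref{xbar}, \eqref{317}. Your version is in fact somewhat more explicit than the paper's on the sign bookkeeping and on how the degenerate case $\tau_1=\tau_2=0$ recovers the unperturbed soliton, but the underlying argument is identical.
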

\vskip5pt
\par\noindent
See Figure 2 and Figure 3 for a representation of the typical behaviours.
\vskip5pt
\begin{proof}
The fact that $\psi_\om^{0,0}$ is a stationary state can be
established by direct computation.

For the other stationary states,
one has to fix the parameters $x_1$ and $x_2$ in
\eqref{prototype}
imposing the matching
conditions \eqref{bc-delta'}. It turns out that there are two families
of solutions, according to the choice of the 
sign of $\phi_\omega$ in the positive halfline: the {\em changing
  sign} and the {\em non changing sign} solutions.

\noindent
-- In order to obtain
 the family of non changing sign solutions $\psi_{\om,+}^{x_{1,\pm},
  x_{2\pm}}$, one defines $\tanh (\sqrt \om x_i) = t_i$ and system
 \eqref{bc-delta'} translates into
the
system
\be T_+ : = \left\{  \begin{array}{ccc}
t_1^2 - t_1^4 & = & t_2^2 - t_2^ 4
\\ & & \\
t_1^{-1} - t_2^{-1} & = & \gamma \sqrt \om
\end{array}
\right. ,  \qquad 0 \leq t_1, t_2 \leq 1. \nonumber
\ee
Then, the possible couples $(x_{1}, x_2)$ are two, that we denote by
$(x_{1+}, x_{2+})$ and $(x_{1-}, x_{2-})$ and are given by \eqref{stazzionario2+},
where the couple $(t_1, t_2)$ is given by \eqref{solt+}. 

\noindent
-- In order to find the family of changing sign solutions
$\psi_{\om,-}^{\bar x_{1\pm},
  \bar x_{2\pm}}$, the parameters $t_i = | \tanh (\sqrt \omega x_i) |$ solve
the
system
\be T_- : = \left\{  \begin{array}{ccc}
t_1^2 - t_1^4 & = & t_2^2 - t_2^ 4
\\ & & \\
t_1^{-1} + t_2^{-1} & = & \gamma \sqrt \om
\end{array}
\right. ,  \qquad 0 \leq t_1, t_2 \leq 1. \nonumber
\ee
Then, 
\eqref{t-symm} immediately provides solutions \eqref{stazzionario}. On
the other hand, equation \eqref{solt-asym} provides two further couples
$(x_{1\pm}, x_{2 \pm})$, defined in \eqref{317}.
\end{proof}

\begin{figure}
\begin{center}
{\includegraphics[width=.40\columnwidth]{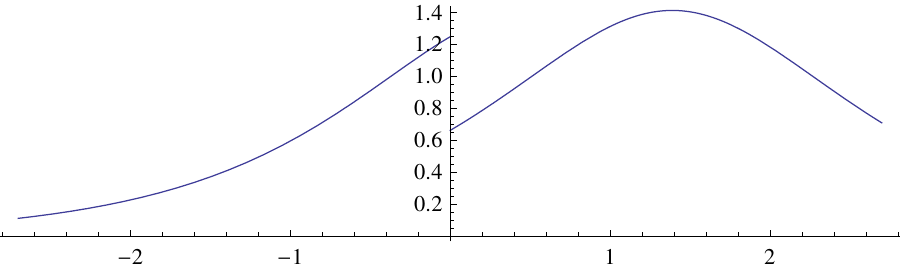}} \quad
{\includegraphics[width=.40\columnwidth]{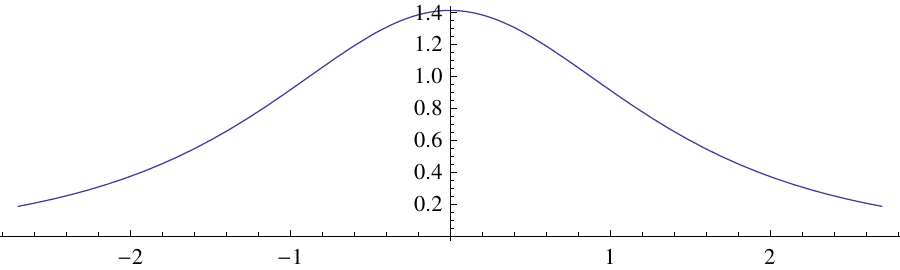}} \\
\caption{ $\psi_{\om, +}^{x_{1+}, x_{2+}}$ for $\gamma=1$,
  $\lambda=1$, $\omega=9$ and pure soliton with $\omega=1$.} 
\end{center}
\end{figure}

\begin{figure}
\begin{center}
{\includegraphics[width=.40\columnwidth]{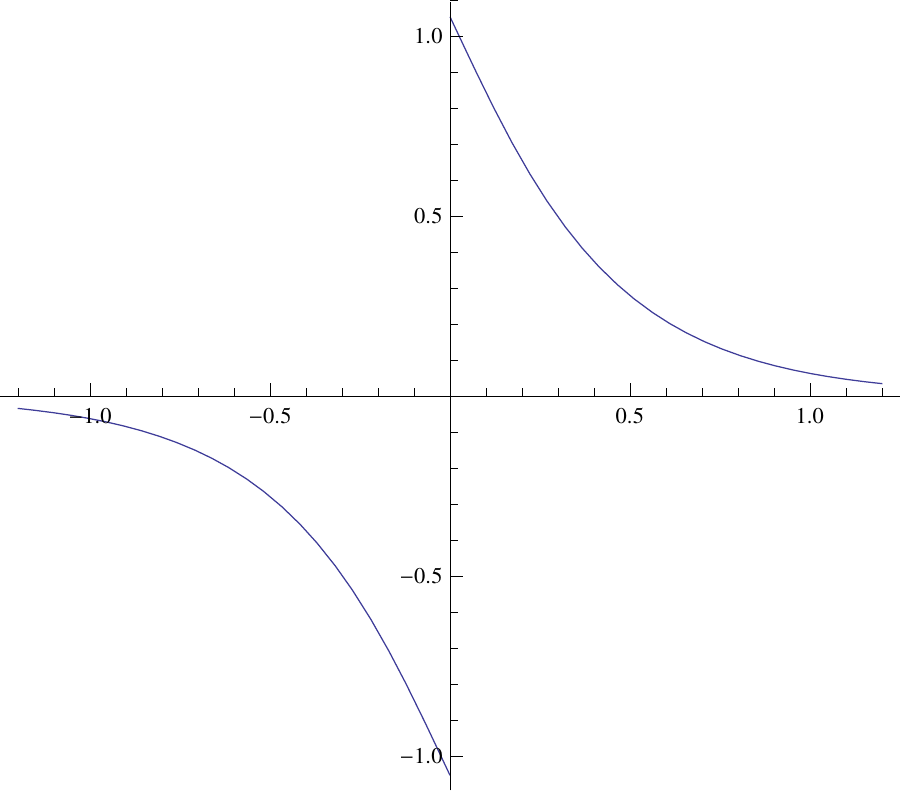}} \quad
{\includegraphics[width=.40\columnwidth]{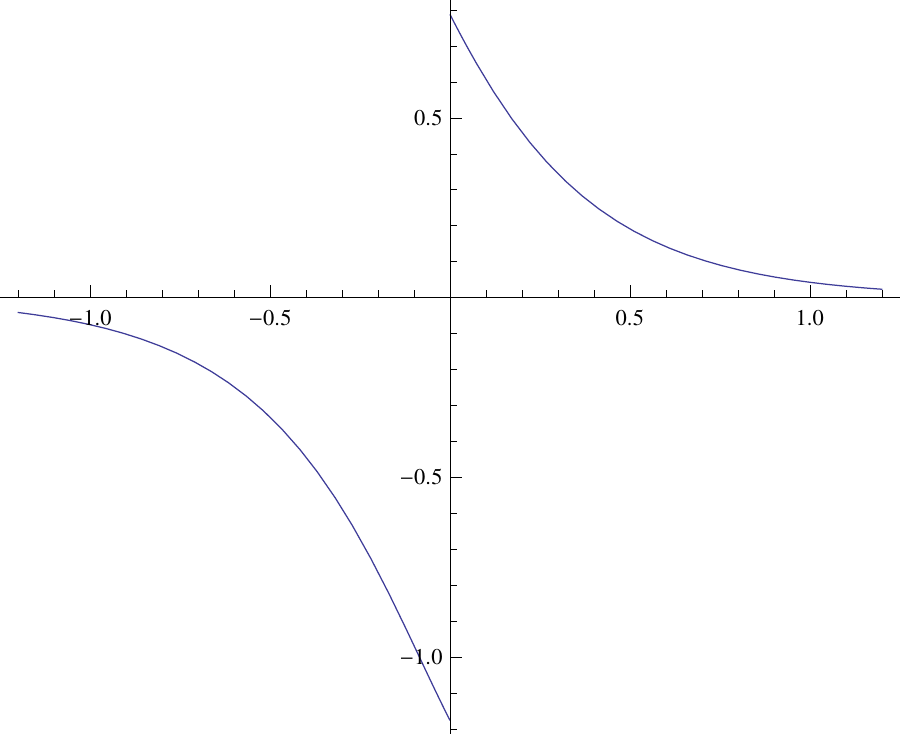}} \\
\caption{ $\psi_{\om, -}^{\bar x, - \bar x}$ and $\psi_{\om, -}^{x_{1+}, x_{2+}}$ for $\gamma=1$, $\lambda=1$, $\omega=9.$}
\end{center}
\end{figure}

By direct comptutations, one can prove the following
\begin{prop}
The $L^2$-norm, 
the energy, and the action of the stationary
states found in Theorem \ref{statstat}, are explicitly given by
\begin{enumerate}
\item For the unperturbed symmetric solitary wave
\be \label{e1} \begin{split}
\| \psi_{\om,+}^{0,0} \|_2^2 \ = \ \f
4 \lambda \sqrt \om, \quad & 
\quad E_{\gamma \delta'} ( \psi_{\om,+}^{0,0} ) \ = \
- \f 2 {3 \lambda} \om^{\f 3 2},  \quad \quad
S_\om ( \psi_{\om,+}^{0,0} ) \ = \
 \f 4 {3 \lambda} \om^{\f 3 2}. \\
\end{split} \ee
\item For the asymmetric non-changing sign stationary states
\be \label{e2} \begin{split}
\| \psi_{\om,+}^{x_{1\pm},x_{2\pm}} \|_2^2 \ = & \ \f 4 \lambda \sqrt \om + \f 2
   {\lambda \gamma} 
\sqrt{1 +
\gamma^2 \om}
- \f 2  {\lambda \gamma} \\
E_{\gamma \delta'}  (\psi_{\om,+}^{x_{1\pm},x_{2\pm}}) \ = & \ - \f 2 {3 \lambda} \om^{\f 3 2} + \f 
{2 - \gamma^2 \om}
{3 \lambda\gamma^3} \sqrt{\gamma^2 \om + 1} - \f 2 {3 \lambda \gamma^3}  \\ 
S_\om (\psi_{\om,+}^{x_{1\pm},x_{2\pm}}) \ = &  \f 4 {3 \lambda} \om^{\f 3 2} +
\f 2 {3 \lambda \gamma^3}
(\gamma^2 \om + 1)^{\f 3 2} - \f \om {\lambda \gamma} - \f 2 {3
  \lambda \gamma^3} \\ 
\end{split} \ee
\item For the antisymmetric solutions
\be \label{e3}
\| \psi_{\om,-}^{\bar x, - \bar x} \|_2^2 \ = \ \f 4 \lambda \sqrt \om
- \f 8 {\gamma \lambda}, \quad 
E_{\gamma \delta'} (\psi_{\om,-}^{\bar x, - \bar x}) \ = \ \f {16} {3 \gamma^3 \lambda}
- \f 2 {3 \lambda} 
 \om^{\f 3 2}, \quad
S_\om (\psi_{\om,-}^{\bar x, - \bar x}) \ = \ \f {16} {3 \gamma^3 \lambda} - 4
\f \om {\gamma \lambda} + \f 4 
{3 \lambda}
\om^{\f 3 2} 
\ee
\item For the asymmetric, changing sign solutions
\be \label{e4} \begin{split}
\| \psi_{\om,-}^{\bar x_{1\pm},\bar x_{2\pm}} \|_2^2 \ = & \ \f 4 \lambda \sqrt \om - 
\f 2 {\lambda \gamma} \sqrt{1 + \gamma^2 \om}
- \f 2 {\lambda \gamma} \\
E_{\gamma \delta'} (\psi_{\om,-}^{\bar x_{1\pm},\bar x_{2\pm}}) \ = & \ - \f 2 {3
  \lambda} \om^{\f 3 2} + \f 
{\gamma^2 \om - 2}
{3 \lambda \gamma^3} (\sqrt{\gamma^2 \om + 1}) - \f 2 {3 \lambda \gamma^3} \\
S_\om (\psi_{\om,-}^{\bar x_{1\pm},\bar x_{2\pm}}) \ = &  \f 4 {3 \lambda} \om^{\f 3 2} +
\f 2 {3 \lambda \gamma^3}
(\gamma^2 \om + 1)^{\f 3 2} - \f \om {\lambda \gamma }- \f 2 {3
  \lambda \gamma^3}  \\ 
\end{split} \ee
\end{enumerate}
\end{prop}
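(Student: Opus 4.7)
The proof is a direct calculation carried out for each of the four families in Theorem \ref{statstat}. The strategy is to exploit the piecewise form of every stationary state: on each half-line the function is $\epsilon\sqrt{2\om/\lambda}\,\sech(\sqrt{\om}(x-x_i))$ for some sign $\epsilon\in\{+1,-1\}$ and shift $x_i$. Introducing $t_i := \tanh(\sqrt{\om}\,x_i)$ and performing the change of variable $y=\sqrt{\om}(x-x_i)$, the integrals of $|\psi|^2$, $|\psi'|^2$ and $|\psi|^4$ on each half-line reduce to combinations of the elementary primitives
\[
\int\sech^2 y\,dy=\tanh y,\qquad \int\sech^4 y\,dy=\tanh y-\tfrac{1}{3}\tanh^3 y,\qquad \int\sech^2 y\,\tanh^2 y\,dy=\tfrac{1}{3}\tanh^3 y,
\]
evaluated at $y=0$ and at infinity. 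The delta-prime boundary contribution to the energy takes the form
\[
-\f{1}{2\gamma}|\psi(0+)-\psi(0-)|^2=-\f{\om}{\gamma\lambda}\bigl(\epsilon_+\sqrt{1-t_2^2}-\epsilon_-\sqrt{1-t_1^2}\bigr)^2,
\]
where $\epsilon_\pm$ denote the signs of $\psi$ on the two half-lines.

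After these reductions all three functionals become polynomials in $t_1$, $t_2$, $\sqrt{1-t_1^2}$ and $\sqrt{1-t_2^2}$, multiplied by universal powers of $\om$ and $\lambda$. Case (1), the free soliton, is the textbook computation. For case (2) I would eliminate $t_1,t_2$ using the identities obtained inside the proof of Proposition \ref{3.1}, namely $t_1^2+t_2^2=1$, $t_1 t_2=(\sqrt{1+\gamma^2\om}-1)/(\gamma^2\om)$, and $t_2-t_1=(\sqrt{1+\gamma^2\om}-1)/(\gamma\sqrt{\om})$ (compare \eqref{summa}--\eqref{strauss}, rescaled by $\om\to\gamma^2\om$); these rewrite every symmetric function of the $t_i$ in closed form in terms of $\gamma$ and $\om$ alone. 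Case (3) is immediate from the explicit value $t_1=t_2=2/(\gamma\sqrt{\om})$. Case (4) proceeds as in case (2), now using $t_1+t_2=(1+\sqrt{1+\gamma^2\om})/(\gamma\sqrt{\om})$ and $t_1 t_2=(1+\sqrt{1+\gamma^2\om})/(\gamma^2\om)$ from \eqref{summa2}--\eqref{strauss2}.

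The main obstacle is organisational rather than conceptual: one has to keep track of the signs arising from the location of $x_i$ (which half-line the centre of the profile sits on), from the sign flip of $\psi$ in the changing-sign cases (3) and (4), and from the two $\pm$ alternatives in \eqref{313} and \eqref{317}. The fact that the functionals in \eqref{e2} and \eqref{e4} do not depend on the $\pm$ label reflects the reflection symmetry $x\mapsto -x$, which exchanges the two solutions within each family while preserving $\|\cdot\|_2^2$, $E_{\gamma\delta'}$ and $S_\om$; it therefore suffices to work with a single representative. Once signs are fixed, the reduction to \eqref{e1}--\eqref{e4} is purely algebraic.
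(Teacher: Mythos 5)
Your proposal is correct and follows exactly the route the paper intends: the paper offers no written proof beyond the assertion ``by direct computations,'' and your reduction via $t_i=\tanh(\sqrt\om\,x_i)$, the elementary $\sech$ primitives, the boundary term $-\tfrac{1}{2\gamma}|\psi(0+)-\psi(0-)|^2=-\tfrac{\om}{\gamma\lambda}\bigl(\epsilon_+\sqrt{1-t_2^2}-\epsilon_-\sqrt{1-t_1^2}\bigr)^2$, and the symmetric-function identities from Propositions 3.1--3.2 is precisely that computation, correctly organized (I checked it reproduces, e.g., \eqref{e3}). No gaps.
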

\begin{remark} \label{cor}
In any family of stationary states $\psi_{\om,+}^{0,0}$,
$\psi_{\om,+}^{x_{1 \pm},x_{2 \pm}}$,  $ \psi_{\om,-}^{\bar x, - \bar x}$,  and
$\psi_{\om,-}^{\bar x_{1 \pm}, \bar x_{2 \pm}}$, introduced in Theorem \ref{statstat}, 
the $L^2$-norm is a monotonically increasing
function of the frequency $\om$ that diverges at infinity. In other
words, it is a bijection of the interval $(0, + \infty)$ into itself.
\end{remark}

\subsection{Minimizing the action on the Nehari manifold}
Here we consider the variational problem 
given by minimizing the action $S_\om$ at fixed $
\om > 0$ on the natural constraint called the {\em Nehari manifold}
\be \label{nehari}
E_{\gamma \delta'} (\psi) - \f \lambda 4 \| \psi \|_4^4 \ = \ 0.
\ee

\begin{prop}
1. For any $\om > 0$, 
\be \label{disug1}
S_\om (\psi_{\om,+}^{0,0}) < S_\om
(\psi_{\om,+}^{x_{1\pm},x_{2\pm}}).
\ee

\noindent
2. For any $\om > \f 4 {\gamma^2}$,
\be \label{disug2}
S_\om (\psi_{\om,-}^{\bar x,- \bar x}) < S_\om (\psi_{\om,+}^{0,0}) < S_\om
(\psi_{\om,+}^{x_{1\pm},x_{2\pm}}).
\ee

\noindent
3. For any $\om > \f 8 {\gamma^2}$,
\be \label{disug3}
S_\om (\psi_{\om,-}^{\bar x_{1\pm}, \bar x_{2\pm}}       ) < S_\om
(\psi_{\om,-}^{\bar x,- \bar 
  x}) < 
S_\om (\psi_{\om,+}^{0,0}) < S_\om
(\psi_{\om,+}^{x_{1\pm}, x_{2\pm}}).
\ee
\end{prop}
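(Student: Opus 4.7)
The plan is to substitute the explicit action formulas of \eqref{e1}-\eqref{e4} into each comparison, cancel common terms, and reduce the three inequalities to elementary one-variable relations in the dimensionless parameter $u:=\gamma^2\omega$.

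For \eqref{disug1}, the leading $\tfrac{4}{3\lambda}\omega^{3/2}$ contribution cancels, and after factoring out the positive prefactor $\tfrac{2}{3\lambda\gamma^3}$ I expect the claim to reduce to the strict Bernoulli-type inequality
\[
(u+1)^{3/2} \;>\; 1+\tfrac{3}{2}u, \qquad u>0.
\]
This is immediate since both sides have the common value $1$ and the common derivative $3/2$ at $u=0$ while the left-hand side is strictly convex.

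For \eqref{disug2}, the inequality on the right already follows from \eqref{disug1}, so only the comparison between the antisymmetric state and the unperturbed soliton remains. After substitution the $\omega^{3/2}$ terms again cancel and one is left with a quantity proportional to $3u-4$, strictly positive on the whole existence interval $u>4$.

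The substantive step is the leftmost inequality in \eqref{disug3}. After substituting the actions of the two sign-changing branches and clearing positive constants, it should reduce to
\[
2(u+1)^{3/2} \;>\; 9(u-2), \qquad u>8.
\]
The main obstacle is that at the threshold $u=8$ both sides equal $54$ \emph{and} have common slope $9$; a first-order comparison at the threshold is therefore inconclusive. My plan is to set $F(u):=2(u+1)^{3/2}-9(u-2)$ and to pass to the second derivative: $F''(u)=\tfrac{3}{2}(u+1)^{-1/2}>0$, so $F$ is strictly convex on $[8,+\infty)$ with $F(8)=F'(8)=0$, which forces $F(u)>0$ for every $u>8$. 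This simultaneous vanishing of value and slope at the threshold is the analytic footprint of the bifurcation at $\omega=8/\gamma^2$ identified in Theorem \ref{statstat}, where the asymmetric sign-changing branch detaches from the antisymmetric one. Combining this step with the inequality \eqref{disug2} already established yields the full chain in \eqref{disug3}.
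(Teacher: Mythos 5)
Your proposal is correct and follows the paper's own route: substitute the explicit action formulas, cancel the common $\f{4}{3\lambda}\om^{3/2}$ term, and reduce each comparison to an elementary inequality in $u=\gamma^2\om$; in particular your reduction $2(u+1)^{3/2}>9(u-2)$ for part 3 is exactly the paper's stated equivalence $3\gamma^2\om < 6+\f{2}{3}(\gamma^2\om+1)^{3/2}$, and your convexity argument (value and slope both matching at $u=8$, with $F''>0$) correctly settles it where a first-order check at the threshold is inconclusive. Note only that reaching this reduction requires the action of $\psi_{\om,-}^{\bar x_{1\pm},\bar x_{2\pm}}$ recomputed from its energy and squared $L^2$-norm in \eqref{e4} --- the $S_\om$ line printed there carries a sign error on the $(\gamma^2\om+1)^{3/2}$ term --- which is evidently what you did, since your final inequality agrees with the paper's.
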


\begin{proof}
Inequalities \eqref{disug1} and \eqref{disug2} follow immediately from
\eqref{e1}, \eqref{e2}, \eqref{e3}.

\n
The first inequality in
\eqref{disug3} is proven by \eqref{e3} and \eqref{e4}. In particular,
one gets that the inequality is equivalent to 
$$
3 \gamma^2 \om < 6 + \f 2 3 (\gamma^2 \om + 1)^{\f 3 2}
$$
that is immediately verified for any $\om > \f 8 {\gamma^2}$.
\end{proof}

\begin{remark}\label{bifurcations}
Define the function $\Delta_+ (\om) : =
S_\om (\psi_{\om,+}^{x_{1, \pm},x_{2, \pm}}) -  S_\om (\psi_{\om,+}^{0,0})$ and
remark that
$$ \Delta_+ (0) \ = \ \f d {d \om}  \Delta_+ (0) \ = \ 0,$$
while 
$\f {d^2} {d\om^2}  \Delta_+ (\om) \ = \ \f {\gamma^2} 2 
(\gamma^2 \om + 1)^{-1/2}
  \ > \ 0$.

\noindent
This phenomenon suggests that 
the stationary states $\psi_{\om,+}^{x_{1, \pm},x_{2, \pm}}$ bifurcate 
from a stationary state corresponding to the case with no defect,
i.e., states of the type $ \phi (\sqrt {\f {2 \om} \lambda}, \sqrt
\om, x_0)$, for some $x_0$, 
at $\om > 0$.

\noindent

\noindent

\noindent
By an analogous computation one findes that
the stationary states $\psi_{\om,-}^{\bar x_{1\pm}, \bar x_{2 \pm}}$ bifurcate from
$\psi_{\om,-}^{\bar x, -\bar x}$ at $\om > \f 8 {\gamma^2}$.
\end{remark}

\subsection{Minimizing the energy in the manifold of constant mass}
Here we consider the variational problem 
of minimizing the energy $E_{\gamma \delta'}$ on the manifold consisting of the
function with given $L^2$-norm $\sqrt \rho > 0$ and finite energy.
As a consequence of Remark
\ref{cor}, one can parametrize the stationary states by the value
$\rho$ of their squared $L^2$-norm $\rho \in (0, + \infty)$, 
instead of $\omega$. In the following, 
given $\rho > 0$, we
make use of the notation 
$$
\varphi_{\rho, +}^{0,0} \ : = \{ \psi_{\om, +}^{0,0}, \ \om \
\rm{s.t.} \
  \| \psi_{\om, +}^{0,0} \|_2^2 \ = \ \rho \}
$$
and analogously for the other families $\varphi_{\rho,
  +}^{x_{1 \pm},x_{2 \pm}}$, $\varphi_{\rho, -}^{\bar x,-\bar x}$,
and $\varphi_{\rho, -}^{\bar x_{1 \pm}, \bar x_{2 \pm}}$.

\begin{prop}
1. For any $\rho > 0$, 
\be \label{ineq1}
E_{\gamma \delta'} (\varphi_{\rho,-}^{\bar x,- \bar x}) \ < \ E_{\gamma \delta'} (\varphi_{\rho,+}^{0,0})
\ < \ E_{\gamma \delta'}
(\varphi_{\rho,+}^{x_{1 \pm},x_{2 \pm}}).
\ee

\noindent
2. For any $\rho > \f 8 {\gamma \lambda} \left( \sqrt 2 - 1 \right)$,
\be \label{ineq2}
E_{\gamma \delta'}
(\varphi_{\rho,-}^{\bar x_{1 \pm}, \bar x_{2 \pm}}) \ < \
E_{\gamma \delta'} (\varphi_{\rho,-}^{\bar x,- \bar x}) \ < \ E_{\gamma \delta'} (\varphi_{\rho,+}^{0,0})
\ < \ E_{\gamma \delta'}
(\varphi_{\rho,+}^{x_{1 \pm},x_{2 \pm}}).
\ee
\end{prop}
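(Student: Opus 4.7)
The argument rests on the identity
\be \label{keyid}
\frac{d}{d\rho}\, E(\varphi_\rho^\bullet) \ = \ -\frac{\om_\bullet(\rho)}{2},
\ee
valid on any branch of stationary states (it follows from $E=S_\om-\om\rho/2$, the criticality of $\varphi_\rho^\bullet$ for $S_\om$, and the chain rule, and can in any case be checked directly from the explicit formulas \eqref{e1}--\eqref{e4}). Here $\om_\bullet(\rho)$ denotes the unique frequency at which the $\bullet$-branch has squared $L^2$-norm $\rho$, guaranteed to exist by Remark \ref{cor}. Comparing energies at fixed $\rho$ thus reduces to comparing the corresponding frequencies and matching up the starting points of the branches.

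First I would read off the four mass-frequency relations. Two of them are explicit: $\sqrt{\om_0(\rho)}=\lambda\rho/4$ for $\varphi_{\rho,+}^{0,0}$ and $\sqrt{\om_a(\rho)}=\lambda\rho/4+2/\gamma$ for $\varphi_{\rho,-}^{\bar x,-\bar x}$, giving $\om_0<\om_a$ on $(0,\infty)$. The remaining two are transcendental, but the required comparisons follow by eliminating $\rho$. Equating the masses of $\varphi_{\rho,+}^{x_{1\pm},x_{2\pm}}$ and $\varphi_{\rho,+}^{0,0}$ gives
\be
2\gamma\bigl(\sqrt{\om_0}-\sqrt{\om_1}\bigr) \ = \ \sqrt{1+\gamma^2\om_1}-1 \ > \ 0,
\ee
so $\om_1<\om_0$. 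Similarly, equating the masses of $\varphi_{\rho,-}^{\bar x_{1\pm},\bar x_{2\pm}}$ and $\varphi_{\rho,-}^{\bar x,-\bar x}$ gives
\be
2\gamma\bigl(\sqrt{\om_2}-\sqrt{\om_a}\bigr) \ = \ \sqrt{1+\gamma^2\om_2}-3,
\ee
whose right-hand side is positive precisely for $\om_2>8/\gamma^2$, i.e.\ on the entire existence range of the branch; thus $\om_2>\om_a$ wherever $\varphi_{\rho,-}^{\bar x_{1\pm},\bar x_{2\pm}}$ is defined.

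Next I would identify initial values. Sending $\om\to0$ in \eqref{e1}, \eqref{e2} and $\om\to4/\gamma^2$ in \eqref{e3} shows that $\varphi_{\rho,+}^{0,0}$, $\varphi_{\rho,+}^{x_{1\pm},x_{2\pm}}$, and $\varphi_{\rho,-}^{\bar x,-\bar x}$ all emerge from $(\rho,E)=(0,0)$. The remaining branch appears only at the bifurcation point $\om=8/\gamma^2$: inserting this value into the mass formula of \eqref{e4} yields $\rho_\star=8(\sqrt 2-1)/(\gamma\lambda)$, matching the stated threshold, and plugging $\om=8/\gamma^2$ into both \eqref{e3} and \eqref{e4} produces the common energy $16(1-2\sqrt 2)/(3\lambda\gamma^3)$, so the two bifurcating branches share an energy at $\rho_\star$.

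Integrating \eqref{keyid} from these common starting points then closes the proof: the orderings $\om_1<\om_0<\om_a$ on $(0,\infty)$ with common initial value $E=0$ at $\rho=0$ deliver both inequalities of part~1, while the ordering $\om_a<\om_2$ on $[\rho_\star,\infty)$ with common initial value at $\rho_\star$ delivers the new leftmost inequality of part~2. I expect the main obstacle to be the frequency comparison $\om_2>\om_a$, since the mass-frequency relation of the asymmetric changing-sign branch is transcendental; but explicit inversion is sidestepped by the mass-equating trick, which reduces it to $\sqrt{1+\gamma^2\om_2}>3$, i.e.\ precisely the existence condition $\om_2>8/\gamma^2$ of that very branch.
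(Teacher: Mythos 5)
Your proof is correct, but it takes a genuinely different route from the paper's. The paper explicitly inverts each mass--frequency relation to write the energy as an elementary function of $\rho$ on every branch (formulas \eqref{omro1}--\eqref{omro4}, stated for $\lambda=\gamma=1$ and then rescaled via \eqref{scaling}), and compares those explicit functions directly; for the two nontrivial comparisons it matches Taylor coefficients at the common starting point ($\rho=0$, resp.\ $\rho_\star$) up to the order at which the branches agree and then compares the next derivative --- the fourth derivative for $E_{\gamma\delta'}(\varphi_{\rho,+}^{0,0})<E_{\gamma\delta'}(\varphi_{\rho,+}^{x_{1\pm},x_{2\pm}})$ and the second derivative for the leftmost inequality of part~2. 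You instead avoid the inversion entirely: the identity $\f{d}{d\rho}E=-\f{\om}{2}$ does hold on every branch (it can be checked term by term against \eqref{e1}--\eqref{e4}), it reduces everything to a first-order comparison of frequencies at equal mass, and your mass-equating elimination turns the transcendental comparisons into the one-line inequalities $\sqrt{1+\gamma^2\om_1}>1$ and $\sqrt{1+\gamma^2\om_2}>3$, the latter being exactly the existence condition $\om_2>\f{8}{\gamma^2}$ of the bifurcating branch. Your common starting values also check out against \eqref{e1}--\eqref{e4}: $E=0$ at $\rho=0$ for the three branches of part~1, and the shared value $\f{16(1-2\sqrt2)}{3\lambda\gamma^3}$ at $\rho_\star=\f{8(\sqrt2-1)}{\gamma\lambda}$ for the two changing-sign branches. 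What your argument buys is that it bypasses the ``elementary but lengthy'' explicit formulas for $E(\rho)$ and the higher-derivative computations, and it makes the bifurcation structure (branches splitting off with equal energy and ordered frequencies) transparent; what the paper's version buys is the explicit expressions \eqref{omro1}--\eqref{omro4} themselves, which are reused in the subsequent linear-limit and no-defect-limit theorems.
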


\begin{proof}
In order to make the formulas less cumbersome, we show the
computations for
the particular case $\lambda = 1$, $\gamma = 1$ only. The general
formulas can be then recovered by the scaling laws
\be \label{scaling} 
\om \to \gamma^2 \om, \quad \rho \to \gamma \lambda \rho, \quad
E_{\delta'}  \to
\gamma^3 \lambda E_{\gamma \delta'},
\ee
that can be directly verified.

Then, we make explicit the dependence of $\om$, and then of the
energy, on $\rho$. To this aim, it proves simpler and more practical
to write down the function $\sqrt \om (\rho)$ 
An elementary but lengthy computation shows that
\begin{itemize}
\item For the family $\varphi_{\rho,+}^{0,0}$
\be \label{omro1}
\om \ = \ \f {\rho^2}{16},  \qquad E_{\delta'}(\varphi_{\rho,+}^{0,0}) \ = \ -
\f{\rho^3}{96}. 
\ee
\item For the family $\varphi_{\rho,+}^{x_{1 \pm},x_{2 \pm}}$
\be \label{omro2} \begin{split}
\sqrt \om \ = & \ \f{2 \rho + 4 - \sqrt{\rho^2 + 4 \rho + 16}}{6} \\
\qquad E_{\delta'}(\varphi_{\rho,+}^{x_{1 \pm},x_{2 \pm}}) \ = & \ - \f 5 {216} \rho^3 + \f 1 {54}
( \rho^2 + 4 \rho + 16)^{\f 3 2} - \f 5 {36} \rho^2 - \f 4 9  \rho - \f
{32} {27}.
\end{split} \ee
\item For the family $\varphi_{\rho,-}^{\bar x, - \bar x}$
\be \label{omro3}
\sqrt \om \ = \ \f \rho 4 + 2,  \qquad E_{\delta'}(\varphi_{\rho,-}^{-\bar x,
  \bar x}) \ = \ - \f {\rho^3}{96} - \f {\rho^2}{4} - 2 \rho.
\ee
\item For the family $\varphi_{\rho,-}^{\bar x_{1 \pm},\bar x_{2 \pm}}$
\be \begin{split} \label{omro4}
\sqrt \om \ = & \ \f{2 \rho + 4 + \sqrt{\rho^2 + 4 \rho + 16}}{6} \\
\qquad E_{\delta'}(\varphi_{\rho,-}^{\bar x_{1 \pm},\bar x_{2 \pm}}) \ = & \ -
\f 5 {216} \rho^3 - \f 1 {54}
( \rho^2 + 4 \rho + 16)^{\f 3 2} - \f 5 {36} \rho^2 - \f 4 9  \rho - \f
{32} {27}.
\end{split} \ee
\end{itemize}
So, the first inequality in \eqref{ineq1} is immediately proven. For the
second inequality, one  first extends by continuity the functions 
$ E_{\delta'}(\varphi_{\rho,+}^{0,0})$ and
$E(\varphi_{\rho,+}^{x_{1 \pm},x_{2 \pm}})$ up to 
the value $\rho = 0$ and obtains
$$
\f {d^j} {d \rho^j} 
E_{\delta'}(\varphi_{0+,+}^{0,0}) \ = \ \f {d^j} {d \rho^j}
E_{\delta'}(\varphi_{0+,+}^{x_{1 \pm},x_{2 \pm}})  \ = \ 
0, \qquad j = 0, 1, 2.
$$
Furthermore, 
$$\f {d^3} {d \rho^3} 
E_{\delta'}(\varphi_{0+,+}^{0,0}) \ = \ \f {d^3} {d \rho^3}
E_{\delta'}(\varphi_{0+,+}^{x_{1 \pm},x_{2 \pm}})  \ = \ - \f 1 {16}.$$
The second inequality in \eqref{ineq1} is then proven by
$$ 0 \ = \  \f {d^4} {d \rho^4} 
E_{\delta'}(\varphi_{\rho,+}^{0,0}) \ < \ \f {d^4} {d \rho^4}
E_{\delta'}(\varphi_{\rho,+}^{x_{1 \pm},x_{2 \pm}})  \ = \ 12 (\rho^2
+ 4 \rho + 16)^{-\f 5 2}.$$ 

It remains to prove the first inequality in \eqref{ineq2}. First,
observe that
$$
E_{\delta'}(\varphi_{8(\sqrt 2 -1),-}^{\bar x,- \bar x}) \ = \ 
E_{\delta'}(\varphi_{8(\sqrt 2 -1),-}^{\bar x_{1 \pm},\bar x_{2 \pm}})  \ = \ -
\f {16} 3 (2 \sqrt 2 - 1) 
$$
and
$$
\f d {d \rho} 
E_{\delta'}(\varphi_{8(\sqrt 2 -1),-}^{\bar x,- \bar x}) \ = \ \f d {d \rho} 
E_{\delta'}(\varphi_{8(\sqrt 2 -1),-}^{\bar x_{1 \pm},\bar x_{2 \pm}})  \ = \ - 4.
$$
Now, we consider the second derivative. The statement $\f  {d^2} {d
  \rho^2} E_{\delta'}(\varphi_{\rho,-}^{\bar x,- \bar x}) \ > \ \f {d^2} {d \rho^2} 
E_{\delta'}(\varphi_{\rho,-}^{\bar x_{1 \pm}, \bar x_{2 \pm}})$ proves equivalent to
$(\rho^2 + 4 \rho^{\f 1 2} + 16)^{\f 1 2} (\rho^2 + 4 \rho^{\f 1 2} +
10) \ > \ - \f {11}{16} \rho + \f 2 9$, which is satisfied for any
$\rho \geq 8 (\sqrt 2 - 1)$. The proof is complete.

\end{proof}

\subsection{Linear limit for the stationary states}
\begin{theorem}
For $\lambda \to 0+$, the following limits hold:
\begin{eqnarray} \label{erste} 
\varphi_{\rho,+}^{0,0} & \longrightarrow  & 0, \qquad {\mbox{in the weak
    topology of}} \, ~ Q_{\delta'}  \\ \label{zweite}
\varphi_{\rho,+}^{x_{1 \pm}, x_{2 \pm}}  & \longrightarrow  &  0, \qquad {\mbox{in the weak
    topology of}} \, ~ Q_{\delta'}  \\ \label{dritte}
\varphi_{\rho,-}^{- \bar x, \bar x}  & \longrightarrow  & \sqrt{\f {2
    \rho} \gamma} e^{- \f 2 \gamma | \cdot | } , \qquad {\mbox{in the strong
    topology of}}  \, ~ Q_{\delta'} 
\end{eqnarray}
\end{theorem}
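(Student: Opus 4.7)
The plan is to follow the template of the proof of Theorem \ref{22}. The preliminary step, common to all three cases, is to invert the mass constraint $\|\varphi\|_2^2=\rho$---supplied by \eqref{e1}, \eqref{e2}, \eqref{e3}---to express $\omega$ as a function of $\lambda$ and to extract the asymptotics of the amplitude $\sqrt{2\omega/\lambda}$ and of the shifts appearing in the $\sech$-profiles as $\lambda\to 0+$.

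\textit{Weak limits \eqref{erste} and \eqref{zweite}.} From \eqref{e1} one reads off $\sqrt\omega=\lambda\rho/4$, while the mass constraint in \eqref{e2} is a quadratic in $\sqrt\omega$ whose branch vanishing with $\lambda$ satisfies $\sqrt\omega=\lambda\rho/4+O(\lambda^2)$. Hence in both cases $\|\varphi\|_\infty=\sqrt{2\omega/\lambda}=O(\sqrt\lambda)$ and $\|\varphi'\|_\infty\leq \sqrt\omega\,\sqrt{2\omega/\lambda}=O(\lambda^{3/2})$ vanish, whereas $\|\varphi\|_{Q_{\delta'}}^2=\rho+O(\lambda^2)$ stays bounded. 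Testing against any $f\in C^\infty_c([0,\infty))\oplus C^\infty_c((-\infty,0])$ one has $|\langle\varphi,f\rangle_{Q_{\delta'}}|\leq\|\varphi\|_\infty\|f\|_1+\|\varphi'\|_\infty\|f'\|_1\to 0$, and weak convergence to zero in $Q_{\delta'}$ follows by density and the uniform $Q_{\delta'}$-bound.

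\textit{Strong limit \eqref{dritte}.} From \eqref{e3} one obtains $\sqrt\omega=\lambda\rho/4+2/\gamma\to 2/\gamma$, while \eqref{xbar} yields $\sqrt\omega\,\bar x=\f 1 2\log\bigl((\gamma\sqrt\omega+2)/(\gamma\sqrt\omega-2)\bigr)=\f 1 2\log(16/(\gamma\lambda\rho))+O(\lambda)$. The competing divergences combine into
\[
2\sqrt{2\omega/\lambda}\,e^{-\sqrt\omega\bar x}\ \longrightarrow\ \sqrt{\f{2\rho}\gamma}.
\]
Mimicking Theorem \ref{22}, I introduce the exponential intermediate profile
\[
\varphi_1(x)\ :=\ 2\sqrt{\f{2\omega}\lambda}\,e^{-\sqrt\omega\bar x}\,e^{-\sqrt\omega|x|}
\]
(with the sign distribution dictated by the antisymmetric structure of the stationary state) and bound the $Q_{\delta'}$-difference by the triangular inequality as $(I)+(II)+(III)$, where $(I)$ replaces $\sech$ by $2e^{-(\cdot)}$ on each halfline, $(II)$ moves the rate $\sqrt\omega$ to $2/\gamma$, and $(III)$ replaces the amplitude by $\sqrt{2\rho/\gamma}$.

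Term $(I)$ is controlled by the identity $\sech(y)-2e^{-y}=-2e^{-3y}/(1+e^{-2y})$: on $\{x>0\}$ one has
\[
\left|\sqrt{\f{2\omega}\lambda}\sech\bigl(\sqrt\omega(x+\bar x)\bigr)-2\sqrt{\f{2\omega}\lambda}e^{-\sqrt\omega(x+\bar x)}\right|\ \leq\ c\,\sqrt{\f{2\omega}\lambda}\,e^{-3\sqrt\omega\bar x}e^{-3\sqrt\omega|x|},
\]
and the bookkeeping $\sqrt{2\omega/\lambda}=O(\lambda^{-1/2})$, $e^{-3\sqrt\omega\bar x}=O(\lambda^{3/2})$ produces an $H^1$-estimate of order $O(\lambda)$; the mirror bound on $\{x<0\}$ is identical. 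Terms $(II)$ and $(III)$ are the exponential analogues of the corresponding terms in the proof of Theorem \ref{22}---perturbation of the rate in an exponential, and convergence of the amplitude prefactor---and are handled by direct estimates modeled on Lemma \ref{prelemma}. The main technical obstacle is precisely the simultaneous bookkeeping of three competing asymptotics---amplitude $\sim\lambda^{-1/2}$, shift $\sim|\log\lambda|$, and exponential factor $e^{-\sqrt\omega\bar x}\sim\sqrt\lambda$---so that every product stays finite and every remainder vanishes as $\lambda\to 0+$.
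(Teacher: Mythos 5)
Your proposal is correct and follows essentially the same route as the paper: invert the mass constraints \eqref{e1}, \eqref{e2}, \eqref{e3} to get the asymptotics $\sqrt\omega\approx\lambda\rho/4$ (collapse, hence the weak limits \eqref{erste}--\eqref{zweite}) and $\sqrt\omega\to 2/\gamma$ with $e^{-\sqrt\omega\bar x}\sim\sqrt{\gamma\lambda\rho}/4$ (giving \eqref{dritte}), and then run the three-term decomposition of Theorem \ref{22} with Lemma \ref{prelemma}. The only differences are that you spell out the weak-convergence mechanism (sup-norm decay plus uniform $Q_{\delta'}$-bound plus density), which the paper declares ``immediate,'' and that you redo the strong-limit estimate directly where the paper simply maps \eqref{dritte} onto \eqref{linlimdelta} via the substitution $\gamma\to 4/\alpha$.
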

\begin{proof}
Limit \eqref{erste} is immediately proven once observed that,
afer rescaling \eqref{scaling}, from \eqref{omro1} one has $\sqrt \om = \f {\lambda
  \rho}
{4 \sqrt \gamma},$ so that $\sqrt \om$ vanishes in the limit.

\noindent
To prove \eqref{zweite} notice that
the first formula in \eqref{omro2}, after rescaling \eqref{scaling}, gives
$$
\sqrt \om \ = \ \f{2 \gamma \lambda \rho + 4 - \sqrt{\gamma^2
    \lambda^2 \rho^2 + 4 \gamma \lambda \rho + 
    16}}{6 \gamma}, 
$$
so that $ \sqrt \om \approx \f {\lambda
  \rho}
{4 \sqrt \gamma},$ as in the previous case.

Limit \eqref{dritte} is mapped into limit \eqref{linlimdelta} by the
replacement $\gamma \to \f 4 \alpha$, so the reader is referred to the
proof of Theorem \ref{22}.

\end{proof}
\begin{remark} 
Notice that the nonlinear limit for the stationary states
$\varphi_{\rho,-}^{\bar x_{1 \pm},\bar x_{2 \pm}}$ is nonsensical: as $\lambda$ vanishes,
the mass $\rho$ of the state is overcome by the threshold $\f 8
{\lambda \gamma}
(\sqrt 2 -1)$ under which there is no such a state. In other
words, in order for  $\varphi_{\rho,-}^{\bar x_{1 \pm},\bar x_{2 \pm}}$ to exist, the
nonlinearity must be not only
present, but also sufficiently strong.
\end{remark}

\begin{remark}
A closer look to the limit \eqref{zweite} shows that, in such case,
both $x_{1+}, x_{2+}$ diverge positively (analogously   $x_{1-},
x_{2-}$ diverge negatively) as $\lambda \to 0+$ (see \eqref{313},
\eqref{solt+}). 
As a consequence, in the
nonlinear limit the corresponding stationary state
$\varphi_{\rho,+}^{x_{1 \pm},x_{2 \pm}}$ (or, equivalently,
$\psi_{\om,+}^{x_{1 \pm},x_{2 \pm}2}$) takes the shape of a soliton that runs 
towards the infinity. Conversely, this means that, starting from the
linear dynamics generated by $H_2$ and turning on the nonlinearity, 
two stationary states arises from very far away and approach the
origin as $\lambda$ grows.  
\end{remark}

\begin{remark}
Limit \eqref{dritte} shows that the nonlinear symmetric
changing sign stationary state  $\varphi_{\rho,-}^{\bar x,-\bar x}$
bifurcates from the linear ground state of the Hamiltonian $H_2$.
\end{remark}

\subsection{No-defect limits}
As stressed in Section 1, we perform two kinds of no-defect limits.

\begin{theorem}
For $\gamma \to 0+$, the following limits hold in the space
$Q_{\delta'}$ defined in 
\eqref{qdelta'}: 
\begin{eqnarray}     \label{primeiro}
\varphi_{\rho,+}^{x_{1 \pm}, x_{2 \pm}}  & \longrightarrow  &  \phi
\left( \sqrt{\f \lambda 2} \f \rho 2, \f {\lambda \rho } 4, \f 2
     {\lambda \rho} \pm \log (3 + 2 \sqrt 2)
\right)
, ~ {\mbox{in the strong
    topology of}} \, ~ Q_{\delta'}  \\ \label{terceiro}
\varphi_{\rho,-}^{- \bar x, \bar x}  & \longrightarrow  & 0
, \qquad
       {\mbox{in the sense of ditributions
    }}  
\end{eqnarray}
\end{theorem}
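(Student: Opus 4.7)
The plan for the strong $Q_{\delta'}$ limit \eqref{primeiro} is to parametrize $\varphi_{\rho,+}^{x_{1\pm},x_{2\pm}}$ on each halfline as $\phi(A,B,C;\cdot)$ with $A = \sqrt{2\om/\lambda}$, $B = \sqrt{\om}$ and $C = \sqrt{\om}\, x_{i\pm}$, and then to invoke Lemma \ref{prelemma} exactly as in the proof of Theorem \ref{22}. The first step is to use the mass identity in \eqref{e2}, after the rescaling \eqref{scaling}, to extract $\sqrt\om$ as a function of $\gamma$ and $\rho$; expanding $\sqrt{1+\gamma^2\om}$ about the origin yields $\sqrt\om \to \lambda\rho/4$ and hence $\gamma^2\om \to 0$. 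Substituting this into \eqref{solt+} gives $t_1, t_2 \to 1/\sqrt 2$, so by \eqref{313} one obtains the limits $\sqrt\om\, x_{i\pm} \to \pm \f 12 \log(3+2\sqrt 2)$. Applying Lemma \ref{prelemma} on $\erre^+$ (shift $x_{2\pm}$) and on $\erre^-$ (shift $x_{1\pm}$) separately produces three error terms, each involving a parameter difference that vanishes as $\gamma\to 0+$; the $\sech$- and exponential factors in the lemma are uniformly controlled since $B=\sqrt\om$ stays bounded and bounded away from zero. Summing the two halfline $H^1$-estimates then yields convergence in $Q_{\delta'} = H^1(\erre^+) \oplus H^1(\erre^-)$.

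The second limit \eqref{terceiro} has a different flavour. From \eqref{e3} one reads $\sqrt\om = \lambda\rho/4 + 2/\gamma \to +\infty$, while $\gamma\sqrt\om \to 2^+$ forces $\sqrt\om\, \bar x = \f 12 \log \f{\gamma\sqrt\om + 2}{\gamma\sqrt\om - 2} \to +\infty$ and $\bar x \to 0$. The stationary state is thus an antisymmetric profile with $L^\infty$ norm of order $\gamma^{-1/2}$, increasingly concentrated at the origin, so neither weak nor strong $H^1$ convergence can hold and one must test $\varphi_{\rho,-}^{-\bar x,\bar x}$ against $\phi \in C_c^\infty(\erre)$. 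Exploiting antisymmetry,
\begin{equation}
\int_\erre \varphi_{\rho,-}^{-\bar x,\bar x}(x)\,\phi(x)\,dx \ = \ \int_0^\infty \varphi_{\rho,-}^{-\bar x,\bar x}(x)\,[\phi(x)-\phi(-x)]\,dx,
\end{equation}
and $\phi(x)-\phi(-x)=O(x)$ near $x=0$. After the substitution $u=\sqrt\om(x+\bar x)$ the integrand becomes $\sech(u)\cdot O(u/\sqrt\om - \bar x)$ on $[\sqrt\om\,\bar x, \infty)$, where the tail bound $\sech(u)\sim 2 e^{-u}$ combined with $e^{-\sqrt\om\,\bar x} = \sqrt{(\gamma\sqrt\om - 2)/(\gamma\sqrt\om + 2)} \sim \sqrt{\gamma\lambda\rho}/4$ produces a factor vanishing with $\gamma$.

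The hardest technical point, and the one I expect to be delicate, is precisely in this last step: one must check that the $O(u/\sqrt\om)$ and the $O(\bar x)$ contributions cancel at leading order (which they do, because $\sqrt\om\,\bar x$ is exactly the lower endpoint of the $u$-integral), so that the remaining pairing is of order $\gamma^{3/2}$ rather than of order $\gamma^{1/2}$. Keeping this cancellation explicit without evaluating the full $\sech$-integrals will require a careful expansion of $\phi(x)-\phi(-x)$ to second order in $x$, together with uniform bounds on $\phi''$ on the support of the test function. Once this is secured, the distributional vanishing of $\varphi_{\rho,-}^{-\bar x,\bar x}$ as $\gamma\to 0+$ follows at once.
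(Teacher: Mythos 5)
Your treatment of \eqref{primeiro} is essentially the paper's own proof: extract $\sqrt\om\to\f{\lambda\rho}{4}$ after the rescaling \eqref{scaling}, deduce $t_1,t_2\to \f{1}{\sqrt 2}$ from \eqref{solt+} since $\gamma^2\om\to 0$, hence $x_{i\pm}\to\pm\f 2 {\lambda\rho}\log(3+2\sqrt 2)$ via \eqref{313}, and conclude with Lemma \ref{prelemma} on each halfline. The paper reads $\sqrt\om$ off \eqref{omro2} rather than off the mass identity, but that is the only (cosmetic) difference.

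For \eqref{terceiro} you take a genuinely more detailed route, and it is sound, but you have over-engineered the final step. The paper merely records the parameter asymptotics $\om\to\infty$, $\gamma\sqrt\om\to 2$ (and asserts $\bar x\to\infty$, whereas the correct statements are $\sqrt\om\,\bar x\to\infty$ while $\bar x\to 0$, exactly as you compute); your pairing against test functions is what actually turns this into a proof of distributional convergence. However, the antisymmetry cancellation you flag as ``the hardest technical point'' is not needed: since
\be \nonumber
\int_0^{+\infty}\varphi_{\rho,-}^{-\bar x,\bar x}(x)\,dx \ = \ \sqrt{\f 2 \lambda}\int_{\sqrt\om\,\bar x}^{+\infty}\sech u\,du \ \leq \ 2\sqrt{\f 2 \lambda}\,e^{-\sqrt\om\,\bar x} \ \leq \ C\sqrt{\gamma\rho},
\ee
the $L^1(\erre)$-norm of the state already vanishes like $\gamma^{1/2}$, so for any test function $\eta$ one has $|\int_\erre \varphi_{\rho,-}^{-\bar x,\bar x}\,\eta|\leq \|\eta\|_\infty\,\|\varphi_{\rho,-}^{-\bar x,\bar x}\|_1\to 0$ directly. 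The order-$\gamma^{1/2}$ bound you were trying to upgrade to $\gamma^{3/2}$ is itself sufficient, so the second-order expansion of $\eta(x)-\eta(-x)$ and the endpoint cancellation can be dispensed with entirely; your version merely yields a sharper (but unneeded) rate.
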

\begin{proof}
Limit \eqref{primeiro} is obtained by observing that, by \eqref{omro2}
with the rescaling \eqref{scaling}, $\omega \approx \f {\lambda \rho} 4
$ as $\lambda \to 0+$. As a consequence, by \eqref{solt+} and
\eqref{stazzionario2+}, one gets $x_{1 \pm} \to \pm \f 2
     {\lambda \rho} \log (3 + 2 \sqrt 2)$. \eqref{primeiro} then follows from
     Lemma \ref{prelemma}.

To prove \eqref{terceiro} we just notice that, from the first identity
in \eqref{omro3}, $\om \to \infty$ and, through \eqref{scaling},
$\gamma \om \to 2$. Then \eqref{xbar} shows that $\bar x \to \infty$.
\end{proof}

\begin{remark}
It is immediate to observe that the stationary state $\varphi_{\rho,+}^{0,0}$
is untouched as $\gamma$ varies, while $\varphi_{\rho,-}^{\bar x_{1
    \pm},\bar x_{2 \pm}}$ ceases to exist as $\gamma \leq \f 8
{\lambda \rho} ( \sqrt 2 - 1)$.
\end{remark}

\begin{remark}
Limit \eqref{primeiro} shows that the stationary states
$\varphi_{\rho,+}^{x_{1 \pm}, x_{2 \pm}} $ bifurcate from the nonlinear soliton
 $\phi
\left( \sqrt{\f \lambda 2} \f \rho 2, \f {\lambda \rho } 4, \f 2
     {\lambda \rho} \log (3 + 2 \sqrt 2) \right)$ at $\rho > 0$.
\end{remark}

\begin{theorem}
For $\gamma \to +\infty$, the following limits hold in the space
$Q_{\delta'}$ defined in 
\eqref{qdelta'}: 
\begin{eqnarray}     \label{primo}
\varphi_{\rho,+}^{x_{1+}, x_{2+}}  & \longrightarrow  &
\chi_{(-\infty,0]} \, \phi
\left( \sqrt{\f \lambda 2} \f \rho 3, \f {\lambda \rho } 6, 0
\right)
, \qquad {\mbox{in the weak
    topology of}} \, Q_{\delta'}  \\ \label{secondo}
\varphi_{\rho,-}^{- \bar x, \bar x}  & \longrightarrow  &  \phi
\left( \sqrt{\f \lambda 2} \f \rho 2, \f {\lambda \rho } 4, 0
\right)
, \qquad
       {\mbox{in the strong
    topology of}} \, Q_{\delta'}  \\ \label{terzo}
\varphi_{\rho,-}^{\bar x_{1+}, \bar x_{2+}}  & \longrightarrow  &
\chi_{[0,+\infty)} \, \phi
\left( \sqrt{\f \lambda 2} \rho, \f {\lambda \rho } 2, 0
\right)
, \qquad {\mbox{in the strong
    topology of}} \, Q_{\delta'}     
\end{eqnarray}
where $\chi_I$ denotes the characteristic function of the set $I$.
\end{theorem}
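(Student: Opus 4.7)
The plan is to reuse the pattern of the two preceding limit theorems: for each of the three families involved, we extract the $\gamma\to+\infty$ asymptotics of the frequency $\sqrt\om$ from the explicit formulas \eqref{omro2}--\eqref{omro4} (after applying the scaling law \eqref{scaling}), then read off the asymptotics of the translation parameters from \eqref{solt+}, \eqref{xbar}, \eqref{solt-asym}, and finally invoke Lemma \ref{prelemma} on each half-line along which the translation remains bounded. On half-lines where the translation diverges, a separate ad hoc argument --- either exponential smallness of the restriction or a weak-convergence estimate against test functions --- is needed. Concretely, substituting $\rho\mapsto \gamma\lambda\rho$ and $\om\mapsto\gamma^2\om$ and expanding $\sqrt{\gamma^2\lambda^2\rho^2+4\gamma\lambda\rho+16}=\gamma\lambda\rho+2+O(\gamma^{-1})$ yields $\sqrt\om\to\lambda\rho/6$, $\lambda\rho/4$, $\lambda\rho/2$ for the three families, matching the sech-widths in \eqref{primo}--\eqref{terzo}. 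Writing $u:=\gamma\sqrt\om\to+\infty$, a short Taylor expansion of \eqref{solt+} and \eqref{solt-asym} gives $t_1^{(+)}\sim 1/u$, $t_2^{(+)}\to 1$ and $t_1^{(-)}\to 1$, $t_2^{(-)}\sim 1/u$, so that \eqref{313}, \eqref{xbar}, \eqref{317} force $x_{1+}\to 0$, $x_{2+}\to+\infty$, $\bar x\to 0$, $\bar x_{1+}\to+\infty$, $\bar x_{2+}\to 0$.

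The two strongly convergent limits \eqref{secondo} and \eqref{terzo} can then be dispatched quickly. For \eqref{secondo}, all the relevant parameters converge to finite limits, so Lemma \ref{prelemma} applied separately on $\erre^-$ and $\erre^+$ (with target amplitude $\sqrt{\lambda/2}\,\rho/2$ and width $\lambda\rho/4$) produces strong $Q_{\delta'}$-convergence to $\phi(\sqrt{\lambda/2}\,\rho/2,\lambda\rho/4,0)$. For \eqref{terzo}, on $\erre^+$ the shift $\bar x_{2+}\to 0$ and Lemma \ref{prelemma} works verbatim; on $\erre^-$, the center $\bar x_{1+}\to+\infty$ with $\sqrt\om\,\bar x_{1+}\to+\infty$, so the pointwise bound $\sech(\sqrt\om(x-\bar x_{1+}))\le 2e^{-\sqrt\om(\bar x_{1+}-x)}$ produces exponential decay of the $H^1(\erre^-)$-norm, hence strong convergence to $0$ on that half-line.

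The main obstacle is \eqref{primo}. On $\erre^-$, since $x_{1+}\to 0$, Lemma \ref{prelemma} again gives strong $H^1(\erre^-)$-convergence to $\phi(\sqrt{\lambda/2}\,\rho/3,\lambda\rho/6,0)$. On $\erre^+$, however, the shift $x_{2+}\to+\infty$ while $\sqrt\om$ stays bounded and bounded away from $0$, so the restricted sech-profile cannot converge strongly to $0$ in $H^1(\erre^+)$; this is precisely why the statement asserts only weak convergence. We will establish the weak limit in two steps. First, $\varphi_{\rho,+}^{x_{1+},x_{2+}}$ is uniformly bounded in $Q_{\delta'}$, since its $L^2$-norm is the prescribed $\sqrt\rho$ and its gradient norm is controlled by the finite limit of $\sqrt\om$. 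Second, for any $\chi\in C_c^\infty(\erre^+)$, the estimate $|\sech(\sqrt\om(x-x_{2+}))|\le 2e^{-\sqrt\om(x_{2+}-x)}$ on the compact support of $\chi$ combined with dominated convergence yields $\langle\varphi_{\rho,+}^{x_{1+},x_{2+}}|_{\erre^+},\chi\rangle_{H^1(\erre^+)}\to 0$, and density of $C_c^\infty(\erre^+)$ upgrades this to weak $H^1(\erre^+)$-convergence to $0$, completing the proof of \eqref{primo}.
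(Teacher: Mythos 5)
Your proposal is correct and follows essentially the same route as the paper: extract the $\gamma\to+\infty$ limits of $\sqrt\om$ from \eqref{omro2}--\eqref{omro4} via the scaling \eqref{scaling}, deduce the behaviour of the $t_i$ and hence of the translation parameters, and apply Lemma \ref{prelemma} on each half-line, treating the escaping soliton separately. In fact you supply details the paper leaves implicit, namely the exponential smallness of the $H^1(\erre^-)$ restriction in \eqref{terzo} and the uniform-boundedness-plus-local-testing argument that justifies the weak convergence in \eqref{primo}.
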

\begin{proof}
To prove
the limit \eqref{primo}, notice first that, from \eqref{omro2} and the
scaling law \eqref{scaling} one has $\sqrt \om \to \f {\lambda \rho}
6$. Then, \eqref{solt+} shows that $t_1 \to 0$ as $t_2 \to 1$, so it
must be $x_{1+} \to 0$ and  $x_{2+} \to \infty$. Lemma
\eqref{prelemma} shows that in the left halfline the convergence is
strong in the space $H^1 (\erre^{-})$, while in the positive halfline
there is a full soliton that run aways, weakening the convergence to
zero.

Limit \eqref{secondo} is an immediate consequence of \eqref{omro3} and
\eqref{xbar} through Lemma \eqref{prelemma}.

Finally, limit \eqref{terzo} is proven through \eqref{omro4}, that
shows $\sqrt \om \to \f \rho 2$, 
\eqref{solt-asym}, that gives $t_{1,+} \to 1$ and $t_{2,+} \to 0$, and
Lemma \eqref{prelemma}. 
 \end{proof}

\end{document}